\documentclass[11pt]{paper}
\usepackage[latin1]{inputenc}
\usepackage[T1]{fontenc}
\usepackage{amsfonts,subcaption}
\usepackage{amsmath}
\usepackage{amssymb}
\usepackage{graphicx}
\usepackage{pstricks}
\usepackage{pst-grad}
\usepackage{multido}
\usepackage[vcentermath]{youngtab}
\usepackage{amsthm}
\usepackage{geometry}
\geometry{ hmargin=3cm, vmargin=2cm }
\usepackage[absolute]{textpos}
\setlength{\TPHorizModule}{1cm}
\setlength{\TPVertModule}{1cm}
\usepackage{url}

\definecolor{purple}{rgb}{0.8,0.12,0.8}
\definecolor{orange}{rgb}{1.0,0.7,0.0}
\definecolor{pink}{rgb}{1,0.5,0.8}
\definecolor{blackg}{rgb}{0.1,0.25,0.1}
\definecolor{ForestGreen}{cmyk}{0.91,0,0.88,0.42}
\definecolor{Turquoise}{cmyk}{0.85,0,0.20,0}



\newcommand{\cLR}{\mathcal{LR}}

\newcommand{\sg}{\mathcal{h}}
\newcommand{\sd}{\mathcal{i}}


\newcommand{\ba}{\mathbf{a}}

\newcommand{\bc}{\mathbf{c}}




\newcommand{\fB}{\mathfrak{B}}

\newcommand{\fJ}{\mathfrak{J}}

\newcommand{\fO}{\mathfrak{O}}

\newcommand{\nZ}{\mathbb{Z}}

\newcommand{\nN}{\mathbb{N}}

\newcommand{\nC}{\mathbb{C}}

\newcommand{\ua}{{\boldsymbol{a}}}
\newcommand{\al}{\alpha}

\newcommand{\la}{\lambda}

\newcommand{\eps}{\varepsilon}


\newcommand{\ka}{\kappa}



\newcommand{\ov}{\overline}


\newcommand{\Ind}{\text{Ind}}
\newcommand{\Irr}{\text{Irr}}

{ \begin{list}%
	{$-$}%
	{\setlength{\labelwidth}{10pt}%
	 \setlength{\leftmargin}{1pt}%
	 \setlength{\itemsep}{.1cm}}}%
{ \end{list} }

\newcommand{\tle}{\unlhd}

\newcommand{\quand}{\quad\text{and}\quad}
\newcommand{\quor}{\quad\text{or}\quad}
\newcommand{\Up}{{\rm Up}}
\newcommand{\Down}{{\rm Down}}

\newcommand{\up}[1]{^{\text{#1}}} 
\newcommand{\ula}{{\boldsymbol{\la}}}
\newcommand{\ulambda}{{\boldsymbol{\la}}}
\newcommand{\uemptyset }{{\boldsymbol{\emptyset}}}
\newcommand{\umu}{{\boldsymbol{\mu}}}
\newcommand{\unu}{{\boldsymbol{\nu}}}

\newtheorem{Th}{Theorem}[section]

\newtheorem{Lem}[Th]{Lemma}

\newtheorem{Prop}[Th]{Proposition}
\newtheorem{Def-Prop}[Th]{Definition-Proposition}
\newtheorem{conj}[Th]{Conjecture}
\theoremstyle{definition}
\newtheorem{Def}[Th]{Definition}
\newtheorem{Exa}[Th]{Example}

\theoremstyle{remark}
\newtheorem{Rem}[Th]{Remark}


\def\SS{\scriptstyle}

\def\finl{~$\SS{\blacksquare}$}

    
  \begin{document}

\title{Ordering Families using Lusztig's symbols in type $B$: the integer case}
\author{J\'er\'emie Guilhot and Nicolas Jacon\thanks{Both authors are partly supported by the ANR (Project 
No ANR-12-JS01-0003) and would like to thank the ICERM for their invitation to the program "Automorphic Forms, Combinatorial Representation Theory and Multiple Dirichlet Series" where part of this work was done.}}  
\date{}
\maketitle

\begin{abstract}
Let $\Irr(W)$ be the set of irreducible representations of a finite Weyl group $W$.  Following an idea from Spaltenstein, Geck has recently introduced a preorder $\leq_L$ on $\Irr(W)$  in connection with the notion of Lusztig families. In a later paper with Iancu, they have shown that in type $B$ (in  the asymptotic case and in the equal parameter case) this order coincides with the order on Lusztig symbols as defined by Geck and the second author in \cite{GJ}. In this paper, we show that this caracterisation extends to the so-called integer case, that is when the ratio of the parameters is an integer. 
\end{abstract}


\section{Introduction}

Let $W$ be a finite Weyl group together with a weight function $L$ and let $\Irr(W)$ be the set of irreducible representations of $W$ over $\nC$.  One the one hand, Lusztig has defined a function $\ba$ on the set $\Irr(W)$ (now known as the Lusztig $\ba$-function) which allows him to define a partition of $\Irr(W)$ into the so-called families. On the other hand, Kazhdan-Lusztig theory naturally yields a partial order $\leq_{\cLR}$ on $\Irr(W)$ which in turn gives rise to a partition of $\Irr(W)$. When the weight function  is equal to the length function, these two partitions turns out to be the same: the proof relies on some very deep geometric interpretation. It is conjectured that the two partitions should coincide in the general case of unequal paramaters. The notion of families plays a fundamental role in the work of Lusztig  on the characters of reductive groups over finite fields. It also naturally appears in the work of Geck  \cite{Gcell} on the cellular's structure of Iwahori-Hecke algebras. \\

Despite a fairly simple definition, the order $\leq_{\cLR}$ is particularly hard to handle. In order to understand it and to give a combinatorial description, Geck has introduced a new preorder $\preceq_L$  in \cite{Gorder} on $\Irr(W)$ which is defined by using only standard operations on the irreducible representation of $W$, such as truncated induction from parabolic subgroups or tensorisation with the sign representation. He then proved that, in the equal parameter case, this order coincides with the Kazhdan-Lusztig order. He then conjectured that this should also hold in the general case of unequal parameters. Later on, Geck and Iancu have studied the order $\preceq_L$ in type $B$ and they have given a complete combinatorial description of this order in the equal parameter case and in the asymptotic case, using the combinatorics of Lusztig symbols. Their result brings in a  generalization of the dominance order which, as far as we know, first appeared in \cite[\S 5.7.5]{GJ}. It should be noted that  such an order also naturally  appears in various contexts of the representation theory of Hecke algebras \cite{Ge} but also in the theory of canonical bases for Fock spaces \cite[Ch. 6]{GJ} or the representation theory of Cherednik algebras 
   \cite{CGG,L,L1}. \\
 
The main aim of this paper is to show that Geck and Iancu results remain valid in the integer case; that is when the parameters satisfy some integer condition. The proof relies on some combinatorial properties of Lusztig symbols which may be of independent interest. The paper is organised as follows. In section 2, we introduced the basic concepts such as the order $\preceq_L$, the notion of symbols and the dominance order on the set of symbols. In Section 3, we caracterise adjacency of two symbols for the dominance order. This will play a crucial role for the proof of the main result in Section 4.


\section{Ordering Lusztig families}
\subsection{Lusztig Families}\label{parl}
Let $W$ be a finite Coxeter group with generating set $S$ and let $L$ be a weight function on $W$, that is a function 
 $L:W\to \mathbb{N}$ such that $L(w_1 w_2)=L(w_1)+L(w_2)$ whenever $w_1,w_2\in W$ satisfy  $\ell(w_1 w_2)=\ell(w_1)+\ell(w_2)$. (Here $\ell$ denotes the usual length function.)
 Let $\Irr (W)$ be the set of  complex irreducible representations of $W$. Using the ``generic degrees'', 
 Lusztig \cite{Luequal} has defined a function 
 $$\begin{array}{rccl}
\ua:& \Irr(W)&\longrightarrow& \nZ\\
  &E&\longmapsto&\ua_E
\end{array}$$
which plays an important role in the representation theory of Weyl groups; see for example \cite[Ch. 1,2,3]{GJ}.
Using  this function, Lusztig \cite[\S 4.2]{Lu} has shown that the set $\Irr(W)$ can be 
naturally  partitioned into the so-called ``families''. In Example \ref{type-A} and Proposition \ref{a-fonc-typeB}, we give explicit formulae for the values of the  $\ba$-function on the irreducible representations of the Weyl groups of type $A$ and  $B$. \\

We now briefly recall the definition of "families". We use the following notation: if $J\subset S$, $M\in \Irr (W_J)$ and 
$E \in \Irr (W)$, we write $M \uparrow E$ if $E$ is a constituent of $\Ind_J^S (M)$ 
and we write $M\rightsquigarrow_L E$ if $M \uparrow E$ {\text and }$\ua_M=\ua_E$ (where $\ba_M$ is the value of the $\ba$-function within the group~$W_J$). 

 \begin{Def} When $W=\{1\}$ then there is only one family which consists in the unit representation. When $W\neq \{1\}$ then 
 $E \in \Irr (W)$ and $E' \in \Irr (W)$ are in the same family if  there exists a sequence 
 $$E=E_0,E_1,\ldots, E_m=E'$$
 in  $\Irr (W)$ such that for all $i=1,2,\ldots,m$, the following condition is satisfied: there exist a subset $I_i \subset S$ and two simple modules $M_i, M_i ' \in  \Irr (W_{I_i})$ which belong to the same family in $\Irr (W_{I_i})$ such that 
 \begin{itemize}
 \item either $M_i \rightsquigarrow_L  E_{i-1}$ and $M_i  ' \rightsquigarrow_L  E_{i}$,
 \item or $M_i \rightsquigarrow_L  E_{i-1}\otimes \varepsilon $ and $M_i  ' \rightsquigarrow_L  E_{i}\otimes \varepsilon$,
 \end{itemize}
 where $\varepsilon$ denotes the sign representation of $W$. 
 \end{Def}
 
\begin{Exa} 
\label{Dihedral-family}
Let $W=\sg s,t \sd$ be the Dihedral group of order 12, that is the element $st$ is of order 6. Let $\ell$ be the usual length function on $W$. We refer to \cite[\S 5.3.4,\S 6.3.5,\S 6.5.10]{GP} for details in the computations.
We have 
$\Irr(W)=\{1_W,\eps,\eps_1,\eps_2, \varphi_1,\varphi_2\}$ where 
\begin{itemize}
\item $1_W$ is the trivial representation,
\item $\eps$ the sign representation, i.e. $\eps(w)=(-1)^{\ell(w)}$,
\item $\eps_s,\eps_t$ are two linear representations defined by $\eps_x(w)=(-1)^{\ell_x(w)}$ where $\ell_x$ for $x=s,t$ is the number of $x$ in any reduced expression of $w$,
\item $\varphi_i$ for $i=1,2$ are defined by 
$$\varphi_i(st)=\begin{pmatrix} \xi^i&0\\0&\xi^i\end{pmatrix}\quand \varphi_i(s)=\begin{pmatrix} 0&1\\1&0\end{pmatrix}\quad \text{ where } \xi=e^{-\frac{i2\pi}{6}}.
$$
\end{itemize}
There are only 4 parabolic subgroups, namely $W_{\emptyset}=\{1\}$, $W_s=\sg s\sd$, $W_t=\sg t\sd$ and $W$ itself. One can check that we obtain the following relations, inducing the trivial representation of each of the parabolic subgroups to $W$:
$$\begin{array}{lllllc}
1_W^W&=1_W,\\
1_{W_s}^W&=1_W+\eps_1+\underset{i}{\sum}\varphi_i,\\
1_{W_t}^W&=1_W+\eps_2+\underset{i}{\sum}\varphi_i,\\
1^W_{W_{\emptyset}}&=1_W+\eps+\eps_1+\eps_2+2\underset{i}{\sum}\varphi_i.
\end{array}$$
Next we obtain the following values for Lusztig $\ba$-function 
$$\begin{array}{|c|c|c|c|c|c|c|}\hline
E\in\Irr(W)& 1_W&\eps_s&\eps_t&\varphi_1&\varphi_2&\eps\\\hline
\ba_E&0&1&1&1&1&6\\\hline
\end{array}\ .$$
From there, we see that the families are 
$$\{1_W\}, \{\eps_1,\eps_2,\varphi_1,\varphi_2\},\{\eps\}.\text{\finl}$$

\end{Exa}

Kazhdan-Lusztig theory allows one to define another preorder on irreducible representations that we will denote $\leq_{\cLR}$. More precisely, one can define a preorder on $\leq_\cLR$ on $W$ using the Hecke algebra associated to $W$ and this preorder yields a partition of $W$ into the so-called two-sided cells. Note that two-sided cells are naturally equipped with a partial order induced by $\leq_{\cLR}$. Next, each two-sided cell $\bc$ affords a representation of $W$, not necessarily irreducible, that we will denote $M_{\bc}$. It turns out that any irreducible representation appears in a unique cell representation and therefore, the preorder $\leq_{\cLR}$ on cells induces a preorder on $\Irr(W)$ (that we denote in the same way)  as follows: $E\leq_{\cLR} E'$ if and only if there exists two two-sided cells $\bc$ and $\bc'$ such that $\bc\leq_{\cLR}\bc'$, $E$ is a constituent of $M_{\bc}$ and $E'$ is a constituent of   $M_{\bc'}$. This preorder naturally induces a partition of $\Irr(W)$. \\

As shown in  \cite[Chapter 5]{Lu} these two partitions turn out to be the same in the equal parameter case. The proof relies on some deep geometric interpretation which does not exist in the general case of unequal parameters. 

\begin{conj}[Lusztig]
\label{LR-family}
Let $W$ be a finite Coxeter group together with a weight function $L$. The partition of $\Irr(W)$ into families agrees with the partition induced by the Kazhdan-Lusztig order $\leq_{\cLR}$.
\end{conj}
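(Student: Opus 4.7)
The statement is Lusztig's celebrated conjecture on the coincidence of families with two-sided Kazhdan-Lusztig cells, known for weight functions equal to the length function (the equal-parameter case) via the deep geometric work of Lusztig in \cite[Ch.~5]{Lu}, using intersection cohomology on flag varieties and character sheaves. Since no such geometric machinery is currently available for arbitrary weight functions, a direct proof along those lines is out of reach, and the paper at hand accordingly does not attempt the full conjecture but focuses on the auxiliary preorder $\preceq_L$ in type $B$ with integer parameter ratio. The natural plan of attack is therefore to factor the conjecture through Geck's intermediate preorder $\preceq_L$, which is built only from standard operations — truncated induction from parabolic subgroups and tensorisation with the sign character — and is therefore amenable to purely representation-theoretic manipulations.

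Concretely, I would aim to establish two separate statements and then combine them. First, that the equivalence classes of $\preceq_L$ coincide with the Lusztig families: this should follow fairly directly from the definitions, since the very constructors used to build families (the relations $M \rightsquigarrow_L E$, together with tensoring by $\varepsilon$) are exactly the elementary moves defining $\preceq_L$. One checks that constancy of the $\ba$-function on families and the inductive generation of families translate into mutual $\preceq_L$-comparability, and that no unwanted collapses between distinct families occur. Second, that the preorders $\preceq_L$ and $\leq_{\cLR}$ on $\Irr(W)$ induce the same partition. The easier half, namely $E \preceq_L E' \Rightarrow E \leq_{\cLR} E'$, uses only that $\leq_{\cLR}$ is itself known to be compatible with parabolic induction (via the interplay between Kazhdan-Lusztig cells and parabolic subalgebras) and with tensorisation by the sign representation.

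The main obstacle is the reverse implication $\leq_{\cLR} \Rightarrow \preceq_L$ in the general unequal-parameter setting, where neither geometric nor uniform combinatorial control on $\leq_{\cLR}$ is available. A feasible partial strategy, which is the one adopted in the paper, is to attack this implication type-by-type through explicit combinatorial models of both orders and to match them. In type $B$ this amounts to describing $\preceq_L$ through a dominance order on Lusztig symbols, as initiated by Geck and Iancu and extended to the integer case in the present work; the delicate combinatorial analysis of adjacency in the symbol order (the subject of Section 3) is precisely what allows the $\preceq_L$-side to be controlled. A uniform proof of Conjecture \ref{LR-family} in all unequal-parameter cases will presumably require either a new combinatorial model of $\leq_{\cLR}$ or a genuine extension of the geometric methods, both of which look out of reach at present.
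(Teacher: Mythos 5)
The statement you were asked to prove is Lusztig's conjecture, which the paper \emph{states} but does not prove; it remains open in the general unequal-parameter case, with the equal-parameter case settled by Lusztig's geometric methods and a handful of other cases verified by direct computation. You correctly recognize this and respond with a strategy outline rather than a purported proof, which is the honest and appropriate reaction. Your description of the factoring-through-$\preceq_L$ strategy, the role of \cite{Gorder}, and the state of the art (equal-parameter case known, $F_4$ and $I_2(m)$ known, type $B$ reduced to a combinatorial comparison of orders on symbols) is accurate and consistent with the framing in the paper, whose actual theorem is the combinatorial identification $\preceq_L\;=\;\unlhd$ on symbols in the integer case of type $B$, not the conjecture itself.

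One overstatement worth flagging: you say that the coincidence of the $\preceq_L$-equivalence classes with the Lusztig families ``should follow fairly directly from the definitions.'' Only one direction is routine: if $E$ and $E'$ lie in the same family then $E \simeq_L E'$ follows easily, as the paper notes. The converse, that $\simeq_L$-equivalent representations lie in the same family (equivalently, that $\preceq_L$ does not collapse distinct families), is explicitly described in the paper as ``not straightforward by any means'' and constitutes one of the main results of \cite{Gorder} and \cite{GL}. So that step of your plan is a substantial theorem in its own right, not a definitional check, and in particular it already uses the full combinatorial control of $\preceq_L$ (via the $\ba$-function and, in type $B$, symbols) that you later invoke for the harder comparison with $\leq_{\cLR}$.
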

As far as unequal parameters are concerned, this conjecture has been verified by explicit computation in type $I_2(m)$ and $F_4$ by Geck. In type $B$, it holds in the asymptotic case; see next section.  

\subsection{The order $\preceq_L$}
In order to have a better understanding of the partial order $\leq_{\cLR}$ in Conjecture \ref{LR-family}, Geck has introduced \cite{Gorder} a preorder $\preceq_L$ (see the definition below) satisfying the following condition:
 $E,E'\in\Irr(W)$  lie in the same family if and only if $E\preceq_L E'$ and $E'\preceq_L E$. Then with Iancu, they studied this preorder in type $B$ and give a complete combinatorial description of it in the asymptotic case and in the equal parameter case. Finally, they deduced that conjecture \ref{LR-family} holds in the ``asymptotic case''. \\
 
 Let us recall the definition of $\preceq_L$.   
\begin{Def} 
When $W=\{1\}$, then $\Irr(W)$ contains only the unit representation which is related with itself. When $W\neq \{1\}$ then 
$E \in \Irr (W)$ and $E' \in \Irr (W)$ satisfy $E \preceq_L E'$ if and only if 
there exists a sequence 
 $$E=E_0,E_1,\ldots, E_m=E'$$
 in  $\Irr (W)$ such that for all $i=1,2,\ldots,m$, the following condition is satisfied: there exist a subset $I_i \subset S$ and two simple modules $M_i, M_i ' \in  \Irr (W_{I_i})$ satisfying $M_i \preceq_L M_i'$ in $\Irr (W_{I_i})$ 
  such that 
 \begin{itemize}
 \item either $E_{i-1}\uparrow \Ind_{I_i}^S (M_{i})$ and $M_i ' \rightsquigarrow_L  E_{i}$,
 \item or $E_i\otimes \eps \uparrow \Ind_{I_i}^S ( M_{i})$ and $M_i' \rightsquigarrow_L  E_{i-1}\otimes \varepsilon$,
 \end{itemize}
 where $\varepsilon$ denotes the sign representation.  We write  $E\simeq_L E'$ if we have $E\preceq_L E'$ and 
  $E'\preceq_L E$.
 \end{Def}
 
Directly from the definition of $\preceq_L$, one can easily check that 
\begin{enumerate}
\item  if $E$ and $E'$ belong to the same family, then $E\simeq_L E'$;
\item $E\preceq_L E' \Rightarrow \ua_E \geq \ua_{E'}$.
\end{enumerate}
The fact that this order gives rise to families (in other words that the converse of 1. holds) 
is not straightforward by any means: it is one of the main result of \cite{Gorder} and \cite{GL}. \\

\begin{Exa}
\label{type-A}
Let  us study the case where $W$ is a Weyl group of type $A_n$ with diagram as follows:
\begin{center}
\begin{picture}(250,30)
\put(  3, 08){$A_n$}
\put( 40, 08){\circle{10}}
\put( 45, 08){\line(1,0){30}}
\put( 80, 08){\circle{10}}
\put( 85, 08){\line(1,0){30}}
\put(120, 08){\circle{10}}
\put(125, 08){\line(1,0){20}}
\put(155, 05){$\cdot$}
\put(165, 05){$\cdot$}
\put(175, 05){$\cdot$}
\put(185, 08){\line(1,0){20}}
\put(210, 08){\circle{10}}
\put( 37, 20){$s_1$}
\put( 76, 20){$s_2$}
\put(116, 20){$s_3$}
\put(203, 20){$s_{n}$}
\end{picture}
\end{center}
The description of $\preceq_L$ is given in \cite[Ex. 2.11]{GL}. We explain it for the convenience of the reader as our proof in type $B_n$ will roughly 
 follows the same pattern. 
It is a well-known fact that the irreducible representations of $W$ are parametrised by partitions of $n$. We use the labelling as in \cite{GP} where, for instance, the unit representation is parametrised by the partition with one part equal to $n$ and the sign representation by the partition with $n$ parts equal to $1$. For a partition $\la$ of $n$, we denote by $E^{\la}$ the corresponding element of $\Irr(W)$. We will denote by $W_k$ the parabolic subgroup of $W$ generated by $\{s_1,\ldots,s_k\}$.\\

Let $\la=(\la_1,\ldots,\la_m)$ be a partition of $n$. Then we have:
$$\ba(E^{\la})=\sum_{i=1}^{m}(i-1)\la_i.$$
We claim that the order $\preceq_L$ is nothing else than the usual dominance order $\unlhd$ on partition. Recall that, for two partitions $\la$ and $\mu$ of $n$, we have $\la\unlhd \mu$ if and only if 
$$\sum_{i=1}^k \la_i\leq \sum_{i=1}^k \mu_i\text{ for all $k\geq 1$}.$$
We will need the following results and we refer to \cite{GP} for proofs and details.
\begin{enumerate}
\item[(1)] We have $E\preceq_L E'$ if and only if there exists a sequence $E=E_0,\ldots,E_m=E'$ such that for each $1\leq i\leq m$ there is a decomposition $n=k_i+\ell_i$ and two irreducible representations $M_i,M_i'\in\Irr(W_{k_i})$ such that $M_i\preceq_L M_i'$ and either
\begin{itemize}
\item $M_i\boxtimes\eps_{\ell}\uparrow E$ and $M'_i\boxtimes \eps_{\ell}\leadsto E'$
\item $M_i\boxtimes\eps_{\ell}\uparrow E'\otimes \eps$ and $M'_i\boxtimes \eps_{\ell}\leadsto E\otimes \eps$
\end{itemize}
\item[(2)] (Pieri's Rule) Let $n=k+\ell$ and let $\la$ be a partition of $k$ and $\mu$ a partition of $n$. Then 
$$E^\la\boxtimes \eps_\ell\uparrow E^\mu$$
if and only if $\mu$ can be obtained from $\la$ by adding one box to $\ell$ different parts of $\la$.
\item[(3)] (truncated Pieri's Rule) Let $n=k+\ell$ and let $\la$ be a partition of $k$ and $\mu$ a partition of $n$. Then 
$$E^\la\boxtimes \eps_\ell\leadsto E^\mu$$
if and only if $\mu$ can be obtained from $\la$ by adding one box to the $\ell$ greatest parts of $\la$.
\end{enumerate}
We are now ready to prove the claim. 
Let $\la,\mu$ be such that $E^{\la}\preceq_L E^{\mu}$. It is enough to consider an elementary step in the definition, so by (1), we may assume that there exists $k\leq n$ and  $E^{\la'},E^{\mu'}\in \Irr(W_k)$  such that the following is satisfied: $E^{\la'}\preceq_L E^{\mu'}$ within $W_k$ and we have either
 \begin{itemize}
\item $E^{\la'}\boxtimes\eps_{\ell}\uparrow E^\la$ and $E^{\mu'}\boxtimes \eps_{\ell}\leadsto E^\mu$;
\item $E^{\la'}\boxtimes\eps_{\ell}\uparrow E^\mu\otimes \eps$ and $E^{\mu'}\boxtimes \eps_{\ell}\leadsto E^\la\otimes \eps$.
\end{itemize} 
Arguying by induction we may assume that $\la'\unlhd \mu'$. Suppose that we are in the first case. Then, by (2), $\la$ can be obtained from $\la'$ by adding one box to $\ell$ different parts of $\la'$ while, by (3), $\mu$ can be obtain from $\mu'$ by adding one box to the $\ell$ greatest parts of $\mu$. It is clear that in this case we still have $\la\unlhd \mu$. The argument is similar in the second case, since $E^{\la}\otimes\eps=E^{\ov\la}$ where $\ov\la$ is the transposed partition of $\la$. \\
 
 Conversely, assume that $\la\unlhd \mu$. We may assume that $\la$ and $\mu$ are adjacent in the dominance order that is, if $\nu$ satisfies $\la\preceq\nu\preceq \mu$ then either $\nu=\la$ or $\nu=\mu$. By \cite[\S 1.16]{Mac-Do}, we know that $\mu$ differs from $\la$ by only one box. That is there exists $i_1<j_1\in\nN$ such that 
 $$\mu_{i_1}=\la_{i_1}+1,\quad \mu_{j_1}=\la_{j_1}-1\quand \mu_i=\la_i \text{ for all $i\neq i_1,j_1$}.$$
Let $\nu$ be the partition defined by 
$$\nu_k=\begin{cases}
\mu_k-1&\mbox{ if $k<j_1$}\\
\mu_k&\mbox{ if $k\geq j_1$}
\end{cases}$$
Then, one sees that $\mu$ can be obtained from $\nu$ by adding one box to the $j_1-1$ greatest parts of $\mu$ while $\la$ can be obtained by adding one box to $j_1-1$ different parts of $\nu$ (namely the part indexed by $\{1,\ldots,j_1\}-\{i_1\}$). Therefore we have $E^\nu\boxtimes \eps_{j_1-1}\uparrow E^\la$ and $E^\nu\boxtimes \eps_{j_1-1}\leadsto E^\mu$ and $\la\preceq_L\mu$ as required using (1). 
\finl

\end{Exa}

Finally, Geck conjectures the following relation.
\begin{conj}[Geck \cite{Gorder}]
 Let $E$ and $E'$ be in $\Irr (W)$ then we have $E\preceq_L E'$ 
 if and only if  $E\preceq_{\cal LR} E'$. 
\end{conj}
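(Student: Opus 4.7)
The plan is to attack the two implications separately. For the direction $E \preceq_L E' \Rightarrow E \preceq_{\cLR} E'$, the strategy is to verify that each elementary step appearing in the definition of $\preceq_L$ is compatible with the Kazhdan--Lusztig preorder. Since $\preceq_L$ is built inductively from three operations --- parabolic induction, its truncated variant $\rightsquigarrow_L$, and tensoring with the sign representation --- it suffices to show that each of these moves respects $\preceq_{\cLR}$. Geck's earlier work on cellular structures of Iwahori--Hecke algebras provides the technical input: if $M \in \Irr(W_J)$ and $E \in \Irr(W)$ satisfy $M \rightsquigarrow_L E$, then the two-sided cell of $M$ (viewed inside $W$ through induction) and that of $E$ lie in a controlled relation for $\preceq_{\cLR}$. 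Tensoring with $\varepsilon$ corresponds to the involution $C_w \mapsto (-1)^{\ell(w)} C_{w_0 w w_0}$ on the Hecke algebra, which reverses $\preceq_{\cLR}$ and therefore matches the second clause in the definition of $\preceq_L$.

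For the reverse direction $E \preceq_{\cLR} E' \Rightarrow E \preceq_L E'$, I would first reduce to the case of an adjacent pair in $\preceq_{\cLR}$, and then try to exhibit a parabolic subgroup $W_J \subset W$ together with $M,M' \in \Irr(W_J)$ satisfying $M \preceq_L M'$ that witnesses the step. The basic heuristic is that for a two-sided cell $\bc$, the multiplicities with which irreducible representations of $W_J$ appear in $\mathrm{Res}^W_{W_J}(M_\bc)$ encode enough information to detect the cell and its neighbours. Combined with an induction on $|W|$ and the Lusztig conjectures (P1)--(P15), this should allow one to propagate the relation upwards along a chain.

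The main obstacle --- and the reason this conjecture remains open in the unequal-parameter setting --- is precisely this second implication. In the equal-parameter case, the geometric interpretation of the Kazhdan--Lusztig basis in terms of intersection cohomology of Schubert varieties yields strong positivity results and a direct bridge between cells and parabolic induction. None of this is available when $L$ is arbitrary, so one is forced into case-by-case combinatorial analysis. This is exactly the route taken in the present paper for type $B$ with integer parameters: one translates $\preceq_L$ into a dominance-type order on Lusztig symbols, controls the adjacent pairs there, and then matches them with what is known about $\preceq_{\cLR}$. Extending this to arbitrary finite Coxeter groups with arbitrary weight function would require a uniform combinatorial substitute for the geometric toolkit, which at present is only available in scattered cases ($I_2(m)$, $F_4$, type $B$ in the asymptotic case, and now the integer case).
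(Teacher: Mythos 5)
The statement you are addressing is a \emph{conjecture}, not a theorem, and the paper contains no proof of it; it is quoted there precisely to explain the motivation for the paper's actual main theorem. So there is no proof in the paper to compare your proposal against. What you have written is not a proof attempt but an accurate survey of the state of the art, and for the most part it is a reasonable one: you correctly identify that the implication $E \preceq_L E' \Rightarrow E \preceq_{\cLR} E'$ is established (Geck, \cite[Prop.~3.4]{Gorder}), that the converse is the open part, and that the equal-parameter case rests on geometric input unavailable for general weight functions. The one point worth flagging as a genuine gap, were you actually trying to prove something, is your sketch of the reverse direction: invoking the Lusztig conjectures (P1)--(P15) is circular in this context, since those conjectures are themselves open in the unequal-parameter setting --- indeed their failure to be known is part of what makes $\preceq_{\cLR}$ hard to control.

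You also slightly misstate what the present paper contributes. Its main theorem (\ref{main}) proves the equivalence $\kappa_{(b,N)}(\ulambda) \unlhd \kappa_{(b,N)}(\umu) \iff \ulambda \preceq_L \umu$ in type $B_n$ with integer parameters; this is a characterization of $\preceq_L$ by a combinatorial dominance order on symbols, not a proof that $\preceq_L$ and $\preceq_{\cLR}$ agree. Your closing sentence, which says the paper ``matches them with what is known about $\preceq_{\cLR}$,'' overstates this: the paper gives the combinatorial half of the bridge, but Geck's conjecture itself is not resolved in the integer case here. The distinction matters because the characterization of $\preceq_L$ is useful and new even though the comparison with $\preceq_{\cLR}$ remains open.
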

It has been shown in \cite[Prop. 3.4]{Gorder} that the implication $E\preceq_L E' \Rightarrow E \preceq_{\cal LR} E'$ always holds. 
The conjecture has been proved in the following cases
\begin{itemize}
\item In the equal parameter case by Geck \cite[Th. 4.11]{Gorder}. The proof relies on a very deep interpretation of Kazhdan-Lusztig theory in terms of geometry;
\item in type $F_4$ and $I_2 (m)$ for all choices of $L$ by Geck \cite[\S 3]{Gorder};
\item in type $B_n$, in the so called asymptotic case by Geck and Iancu \cite{GL}.
\end{itemize}


\subsection{Type $B_n$}
\label{type-B}
ILet $W$ be a the Weyl group of type $B_n$ with diagram as follows  
\begin{center}
\begin{picture}(250,30)
\put(  3, 08){$B_n$}
\put( 40, 08){\circle{10}}
\put( 44, 05){\line(1,0){33}}
\put( 44, 11){\line(1,0){33}}
\put( 81, 08){\circle{10}}
\put( 86, 08){\line(1,0){29}}
\put(120, 08){\circle{10}}
\put(125, 08){\line(1,0){20}}
\put(155, 05){$\cdot$}
\put(165, 05){$\cdot$}
\put(175, 05){$\cdot$}
\put(185, 08){\line(1,0){20}}
\put(210, 08){\circle{10}}
\put( 37, 20){$t$}
\put( 76, 20){$s_1$}
\put(116, 20){$s_2$}
\put(203, 20){$s_{n-1}$}
\end{picture}
\end{center}
together with a weight function $L:W\to \mathbb{Z}$. 
 \begin{Rem}\label{asy}
The case where $L(t)>(n-1)L(s_i)>0$ for all $i=1,\ldots,n-1$ 
  is called the ``asymptotic case''. 
 
 \end{Rem}

 In the rest of the paper, we will assume that we are in the so-called "integer case", that is,  $L(t)=r\in \mathbb{N}$ and $L(s_i)=1$ if $i=1,\ldots,n-1$; we will denote by $L_r$ such a weight function.
 We now describe how the irreducible representations, the $\ua$-function and the families can be described in this case, for all choices of weight function $L_r$. \\

 The set of irreducible representations of $W$ is naturally labelled by the set $\Pi_n^2$  of bipartitions of rank $n$. Recall that for a partition $\lambda=(\lambda_1,\ldots,\lambda_r)$ we have set $|\lambda|=\sum_{1\leq i\leq r }\lambda_i$.
A bipartition of $n$ is just a 
pair of partitions $\ulambda=(\lambda^{1},\lambda^{2})$ such that
$|\lambda^{1}|+|\lambda^{2}|=n$. We thus have
$$\Irr (W)=\{E^{\ulambda}\ |\ \ulambda\in \Pi_n^2\}.$$
We refer to \cite{GJ} for more details and for an explicit description of the irreducible representations $E^\ula$ for $\ula\in \Pi^2_n$. \\

To each bipartition, one can associate an important object: its symbol. The advantage of this new notion is that it takes into account the the weight function that we have choosen on $W$.  Let  $\ulambda=(\lambda^1,\lambda^2)$  be a bipartition of $n\in \mathbb{N}$ with $\la^i=(\la^i_1,\ldots,\la^i_{r_i})$ for $i=1,2$. 
  Let  $N\in \mathbb{N}$ be such that 
  $$N\geq \textrm{max} \{i\in \mathbb{N}\ |\ \textrm{max} \{\lambda^1_i,\lambda^2_i\}\neq 0\}.$$
  Such an integer will be call admissible for the bipartition $\ulambda$. Then, we associate to $\ulambda$ and $N$ the  
 $(b,N)${\it -symbol}
  $$
\mathfrak{B}_{(b,N)}  (\ulambda)=\left(
\begin{array}
[c]{lllll}
\mathfrak{B}^{2}_{N} &   \ldots  &    \mathfrak{B}^{2}_1 &  & \\ 
\mathfrak{B}^{1}_{N+b}  &   \ldots  &  \mathfrak{B}^{1}_{2}    & \ldots   &  \mathfrak{B}^{1}_1   \\  
\end{array}
\right)$$
where 
\renewcommand{\arraystretch}{1.5}
$$\begin{array}{rllcc}
\mathfrak{B}^1_j&:=\lambda^1_j-j+N+b\text{ for } j=1,\ldots,N+b, \\
\mathfrak{B}^2_j&:=\lambda^2_j-j+N\text{ for } j=1,\ldots,N.
\end{array}
$$ 
We will denote by 
 $\kappa_{(b,N)} (\ulambda)$ the sequence of $2N+b$ elements in $
\mathfrak{B}_{(b,N)}  (\ulambda)$ written in decreasing order. A straightforward computation shows that it is a partition of  the integer 
$$f(b,N,n)=n+(N(N-1))/2+(N+b)(N+b-1)/2.$$
We set $\kappa_{(b,N)} (\ulambda)=(\kappa_1,\ldots,\kappa_{2N+b})$ and 
 $$n_{(b,N)} (\ulambda)=\sum_{1\leq i\leq r} (i-1) \kappa_i.$$ 
\begin{Exa}
Let $b=2$, $\la^1=(5,1)$, $\la^2=(2,2,1)$ and $\ulambda= (\la^1,\la^2)$. Then, $N=3$ is admissible for $\ula$ and we have
\renewcommand{\arraystretch}{1}

$$
\mathfrak{B}_{(2,3)}  (\ulambda)=\left(
\begin{array}
[c]{llll l}
 1 & 3 & 4 &  & \\
  0 & 1 &2 & 4 &9
\end{array}
\right). $$
Then $\kappa_{(2,3)} (\ulambda)=(9,4,4,3,2,1,1,0)$ which is a partition of $f(2,3,11)=24$. \finl
\end{Exa}

\begin{center}
{\it From now on and until the end of this section, we fix a weight function $L_b$ on $W$. }
\end{center}

Using symbols, one can easily describe the $\ua$-function and the families in type $B_n$ (see  \cite[\S 22.14, \S 23.1]{Lu}).  
\begin{Prop}[Lusztig]
Let $\ulambda=(\lambda^1,\lambda^2),\umu=(\mu^1,\mu^2)\in \Pi^2_n$ and assume that  $N\in \mathbb{N}$ 
 is admissible for both $\ula$ and $\umu$. Then the irreducible representations $E^\ulambda$ and $E^\umu$ belong to the same family in $W$ with respect to the weight function $L_{b}$
  if and only if $\kappa_{(b,N)} (\ulambda)=\kappa_{(b,N)} (\umu)$.
  \end{Prop}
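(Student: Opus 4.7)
The proof follows Lusztig's general strategy for describing families in classical types via symbols. A key preliminary step is to recall (or re-derive) the closed-form expression
$$\ba_{E^\ula} \;=\; n_{(b,N)}(\ula) \;-\; g(b,N)$$
where $g(b,N)$ is a normalisation constant that depends only on $b$ and $N$, not on the bipartition. Since $n_{(b,N)}(\ula)$ depends only on the multiset $\kappa_{(b,N)}(\ula)$, this already implies that two bipartitions sharing the same $\kappa_{(b,N)}$ have the same $\ba$-value, which is a necessary condition for lying in the same family.

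For the ``if'' direction, I would argue that any two symbols with the same multiset of entries can be connected by a finite sequence of elementary transpositions exchanging one entry of the top row with one entry of the bottom row; hence it is enough to treat a single such swap. This reduces the problem to identifying a parabolic subgroup $W_I$ (a direct product of a type-$A$ factor and a smaller type-$B$ factor) together with irreducible $W_I$-representations that witness the required steps in the inductive definition of $\simeq_L$. The combinatorics of induction from such Levi subgroups are governed by a type-$B$ analogue of Pieri's rule, and the swap in the symbol corresponds, after the appropriate relabelling, to simply moving a block of rows between the two partitions while keeping the $\ba$-value constant.

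For the ``only if'' direction, I would proceed by induction on $n$. Given a chain $E^\ula = E_0, E_1, \ldots, E_m = E^\umu$ from the definition of families, it suffices to analyse one elementary step $M_i \rightsquigarrow_L E_{i-1}$, $M_i' \rightsquigarrow_L E_i$ (or the sign-twisted version), where $M_i, M_i'$ are in the same family of a smaller parabolic. By induction $\kappa_{(b,N)}$ is preserved at the level of the smaller group, and one must show that the operation ``induce and keep only the $\ba$-constant constituents'' preserves the symbol multiset when passing back to $W$. The effect of tensoring by $\eps$, which interchanges $\la^1 \leftrightarrow \la^2$ and correspondingly complements the symbol in a suitable rectangle, is compatible with $\kappa_{(b,N)}$, so it causes no additional trouble.

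The main obstacle is the Pieri-type analysis on symbols in the integer case: unlike the asymptotic case treated in \cite{GL}, the entries of the top and bottom rows of a $(b,N)$-symbol can coincide and freely interleave, so the effect of induction on $\kappa_{(b,N)}$ is much less rigid. Controlling precisely which constituents survive the $\ba$-constancy condition $\rightsquigarrow_L$ requires a careful bookkeeping of how individual symbol entries move under adding a horizontal strip in either coordinate, and it is this combinatorial analysis that carries the weight of the proof.
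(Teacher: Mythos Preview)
The paper does not prove this proposition at all: it is stated as a result of Lusztig and simply cited to \cite[\S 22.14, \S 23.1]{Lu}. So there is no ``paper's own proof'' to compare against; your proposal is an attempt to reconstruct the argument behind the citation.

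As a reconstruction, the broad architecture you describe (reduce to a single swap of symbol entries, realise the swap via a parabolic $W_I$ of type $B_k\times A_{l-1}$ and a Pieri-type induction, and run the converse by induction on $n$ through the recursive definition of families) is indeed the shape of Lusztig's argument in the cited sections. Two remarks are in order. First, you conflate the present proposition with the harder analysis of $\preceq_L$: the ``main obstacle'' you flag about interleaving symbol entries in the integer case is exactly what the rest of the paper is about, but it is \emph{not} needed for the family statement here. For families one only needs $\rightsquigarrow_L$, and Lusztig's description of the $j$-induction on symbols (increasing the $l$ largest entries by~$1$, cf.\ Lemma~\ref{rec}) is already sharp enough, regardless of how the rows interleave. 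Second, your ``only if'' sketch is incomplete as written: you assert that ``induce and keep only the $\ba$-constant constituents'' preserves $\kappa_{(b,N)}$, but the actual content is that $M\rightsquigarrow_L E$ forces $\kappa(E)$ to be obtained from $\kappa(M)$ by increasing a fixed set of largest entries, hence $\kappa(E)$ is determined by $\kappa(M)$ and $l$ alone; this is what makes the induction go through, and it deserves to be stated rather than gestured at.
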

  \begin{Exa}
(1) As noted in \cite[Ex. 8.1]{GL}, if we have $b>n-1$ (the asymptotic case), then all the families are singletons. \\

(2) Consider the Weyl group of type $B_3$ together with the weight function $L_1$. There are $10$ bipartitions of $3$. In Figure 1, we list all the bipartitions of 3 together with their $(3,1)$-symbols and the partitions $\ka_{3,1}$.  
\renewcommand{\arraystretch}{1.7}
\begin{figure}[h!]
$$\begin{array}{|c|c|c|ccc}\hline
\text{Bipartition of 3}& \text{Symbol} & \ka_{1,3} \\\hline
\Yboxdim5pt
(\yng(1,1,1),\tiny{\emptyset})&\renewcommand{\arraystretch}{.75}\begin{pmatrix}0&1&2&\\0&2&3&4 \end{pmatrix}&(4,3,2,2,1,0,0) \\\hline\Yboxdim5pt
(\yng(2,1),\tiny{\emptyset})&\renewcommand{\arraystretch}{.75}\begin{pmatrix}0&1&2&\\0&1&3&5 \end{pmatrix}&(5,3,2,1,1,0,0) \\\hline\Yboxdim5pt
(\yng(3),\tiny{\emptyset})&\renewcommand{\arraystretch}{.75}\begin{pmatrix}0&1&2&\\0&1&2&6 \end{pmatrix}&(6,2,2,1,1,0,0) \\\hline\Yboxdim5pt
(\tiny{\emptyset},\yng(1,1,1))&\renewcommand{\arraystretch}{.75}\begin{pmatrix}1&2&3\\0&1&2&3 \end{pmatrix}&(3,3,2,2,1,1,0) \\\hline \Yboxdim5pt 
(\tiny{\emptyset},\yng(2,1))&\renewcommand{\arraystretch}{.75}\begin{pmatrix}0&2&4\\0&1&2&3 \end{pmatrix}&(4,3,2,2,1,0,0) \\\hline \Yboxdim5pt 
(\tiny{\emptyset},\yng(3))&\renewcommand{\arraystretch}{.75}\begin{pmatrix}0&1&5\\0&1&2&3 \end{pmatrix}&(5,3,2,1,1,0,0) \\\hline \Yboxdim5pt 
(\yng(1),\yng(1,1))&\renewcommand{\arraystretch}{.75}\begin{pmatrix}0&2&3&\\0&1&2&4 \end{pmatrix}&(4,3,2,2,1,0,0) \\\hline \Yboxdim5pt 
(\yng(1),\yng(2))&\renewcommand{\arraystretch}{.75}\begin{pmatrix}0&1&4&\\0&1&2&4  \end{pmatrix}&(4,4,2,1,1,0,0) \\\hline \Yboxdim5pt 
(\yng(1,1),\yng(1))&\renewcommand{\arraystretch}{.75}\begin{pmatrix}0&1&3&\\0&1&3&4 \end{pmatrix}&(4,3,3,1,1,0,0) \\\hline \Yboxdim5pt 
(\yng(2),\yng(1))&\renewcommand{\arraystretch}{.75}\begin{pmatrix}0&1&3&\\0&1&2&5  \end{pmatrix}&(5,3,2,1,1,0,0) \\\hline  
\end{array}$$
 \caption{Bipartitions and symbols for $(B_3,L_1)$ }
              \label{ka2}
\end{figure}

One can then easily check that the families are given by 
$$\left\{\Yboxdim5pt(\yng(1,1,1),\tiny{\emptyset}), (\tiny{\emptyset},\yng(2,1)),(\yng(1),\yng(1,1))\right\}, 
\left\{\Yboxdim5pt (\yng(2,1),\tiny{\emptyset}), (\tiny{\emptyset},\yng(3)),(\yng(2),\yng(1))\right\}, 
\left\{\Yboxdim5pt (\yng(3),\tiny{\emptyset})\right\}, 
\left\{\Yboxdim5pt  (\tiny{\emptyset},\yng(1,1,1))\right\}, 
\left\{\Yboxdim5pt (\yng(1),\yng(2))\right\},
\left\{\Yboxdim5pt (\yng(1,1),\yng(1))\right\}.
\text{\finl}$$

\end{Exa}

\begin{Prop}[Lusztig]
\label{a-fonc-typeB}
Let $\ulambda=(\lambda^1,\lambda^2)\in \Pi^2_n$ and assume that  $N\in \mathbb{N}$ 
 is admissible for~$\ula$.  Then  $$\ua(E^{\ulambda})=n_{(b,N)} (\ula)-n_{(b,N)} (\uemptyset)$$
  \end{Prop}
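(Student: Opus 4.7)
The plan is to derive the formula from Hoefsmit's explicit product expression for the Schur elements of the Iwahori--Hecke algebra of type $B_n$ with parameters $(v^b,v)$. By the very definition of Lusztig's $\ba$-function, $\ua(E^{\ulambda})$ is the $v$-adic valuation at $v=0$ of the generic degree attached to $E^{\ulambda}$.

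First I would verify that the quantity $n_{(b,N)}(\ulambda)-n_{(b,N)}(\uemptyset)$ is independent of the admissible $N$ chosen, so that the statement is well-posed. When $N$ is replaced by $N+1$, every entry $\mathfrak{B}^1_j$ and $\mathfrak{B}^2_j$ of the symbol increases by $1$, and one extra $0$ is appended to each row (this uses the admissibility of $N$, which forces $\la^1_{N+b+1}=\la^2_{N+1}=0$). Since every old entry is non-negative, the new sorted tuple is $(\kappa_1+1,\ldots,\kappa_{2N+b}+1,0,0)$, and a direct computation gives
$$n_{(b,N+1)}(\ulambda)-n_{(b,N)}(\ulambda)=\sum_{i=1}^{2N+b}(i-1)=\binom{2N+b}{2},$$
an expression independent of $\ulambda$. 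Subtracting the analogous identity for $\uemptyset$ yields the desired invariance, and in particular allows one to increase $N$ freely when convenient.

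Next I would translate Hoefsmit's formula for the Schur element $s_{\ulambda}$ into symbol data. The formula decomposes $s_{\ulambda}$, up to a unit in $\nZ[v,v^{-1}]$, as a product of factors indexed by pairs of entries of the symbol: pairs within the same row, and pairs across the two rows. Each factor is a monomial in $v$ whose exponent is an explicit affine function of the two entries of the pair, essentially given (up to a shift that depends on $b$ in the cross-row case) by the minimum of the two associated values. Summing over all pairs and applying the elementary identity
$$\sum_{1\le i<j\le 2N+b}\min(\kappa_i,\kappa_j)=\sum_{i=1}^{2N+b}(i-1)\kappa_i,$$
valid for any decreasingly sorted tuple $\kappa_1\ge\ldots\ge\kappa_{2N+b}$, one rewrites the total valuation as $n_{(b,N)}(\ulambda)$ plus a constant depending only on $b$ and $N$. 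Specialising to $\ulambda=\uemptyset$ pins this constant down as exactly $n_{(b,N)}(\uemptyset)$, and the formula follows.

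The main obstacle will be the careful combinatorial bookkeeping in the last step, particularly for the cross-row factors where the parameter $b$ enters asymmetrically and where an entry of one row of the symbol may coincide in value with an entry of the other row. One must check that the asymmetric $b$-shifts occurring in Hoefsmit's cross-row factors are precisely absorbed by the correction term $n_{(b,N)}(\uemptyset)$, and that coincidences between the two rows do not produce spurious contributions to the valuation. Once this matching is verified, the formula holds uniformly in $b\in\nN$, recovering both the asymptotic case (handled in \cite{GL}) and the equal-parameter case as special instances.
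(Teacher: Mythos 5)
The paper itself gives no proof of this proposition: it is attributed to Lusztig and the reader is pointed to \cite[\S 22.14, \S 23.1]{Lu} and, for the more general $(\nZ/\ell\nZ)^n\rtimes\mathfrak{S}_n$ case, to \cite[Prop.\ 5.5.11]{GJ}. So there is no internal argument to compare yours against; what follows assesses your blind attempt on its own terms and against the cited sources.

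Your strategy --- read off $\ua(E^{\ulambda})$ as the order of vanishing at $v=0$ of the generic degree, substitute Hoefsmit's product formula for the Schur element, and regroup the exponents into a sum of pairwise minima over symbol entries --- is in fact the standard route and is, in substance, how the cited references proceed. Your preliminary verification that $n_{(b,N)}(\ula)-n_{(b,N)}(\uemptyset)$ is independent of the admissible $N$ is correct: passing from $N$ to $N+1$ adds $1$ to every existing entry of both rows and appends a $0$ to each (admissibility being exactly what makes the new bottom entries vanish), so $n_{(b,N+1)}(\ula)=n_{(b,N)}(\ula)+\binom{2N+b}{2}$, and the $\ulambda$-dependence cancels in the difference. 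The identity $\sum_{i<j}\min(\kappa_i,\kappa_j)=\sum_i(i-1)\kappa_i$ for a weakly decreasing tuple is also correct and is the right mechanism for converting the pairwise structure of Hoefsmit's formula into $n_{(b,N)}$.

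That said, the proposal stops precisely at the point where the real work lies. You correctly flag the two delicate points --- the asymmetric $b$-shift in the cross-row factors, and the possibility that an entry of $\mathfrak{B}^1$ equals an entry of $\mathfrak{B}^2$ --- but you do not resolve them, and these are exactly the places where a naive ``sum of minima'' count can go wrong. In particular, within-row pairs always have distinct entries (the rows of a symbol are strictly increasing), whereas cross-row ties do occur, and in Hoefsmit's formula a tied cross-row pair contributes a factor whose $v$-valuation is \emph{not} simply $\min(a,a)$ minus the generic shift; one must check that the deficit is exactly compensated when one subtracts $n_{(b,N)}(\uemptyset)$, whose symbol is the ``maximally tied'' one. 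Likewise the $b$-dependent additive shifts in the cross-row exponents must be shown to be constant in $\ulambda$ once $b$ and $N$ are fixed. As written, the argument is a correct blueprint but not a proof; completing it requires writing out Hoefsmit's factors explicitly and carrying the valuation through, which is essentially the content of \cite[Prop.\ 5.5.11]{GJ}.
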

Note that there is a similar description of the $\ba$-function in the case the more general case where $W$ is the  complex reflection group 
 $ (\nZ/\ell\nZ)^n\rtimes \mathfrak{S}_n $ (see \cite[Prop 5.5.11]{GJ}).\\
 
Recall that our main problem is an explicit description of the order $\preceq_L$. We already know 
  that this preorder is a refinement of the preorder induced by the $\ua$-function.
Actually, 
   looking at the above formula for the $\ua$-function, we see that  the order induced by this function admits another natural refinement 
    which has first been introduced in \cite{GJ}, again in the wider context of complex reflection groups of types 
 $ (\nZ/l\nZ)^n\rtimes \mathfrak{S}_n $. 

   \begin{Prop}[Geck-J.]\label{formula}
Let $\ulambda,\umu\in \Pi^2_n$ and assume that  $N\in \mathbb{N}$ 
 is admissible for both $\la$ and $\mu$. Then we have 
 $$\kappa_{(b,N)} (\ulambda)\unlhd\kappa_{(b,N)} (\umu)\Longrightarrow \ua(E^\ulambda)    \geq\ua(E^\umu).$$
\end{Prop}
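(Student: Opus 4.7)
The proof reduces immediately to a general, well-known inequality for the dominance order on partitions, which I would establish by summation by parts.

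First, by Proposition \ref{a-fonc-typeB}, we have
\[
\ua(E^{\ulambda}) - \ua(E^{\umu}) = n_{(b,N)}(\ulambda) - n_{(b,N)}(\umu),
\]
so the claim is equivalent to the implication
\[
\kappa_{(b,N)}(\ulambda) \unlhd \kappa_{(b,N)}(\umu) \;\Longrightarrow\; n_{(b,N)}(\ulambda) \geq n_{(b,N)}(\umu).
\]
Since both $\kappa_{(b,N)}(\ulambda)$ and $\kappa_{(b,N)}(\umu)$ are partitions of the same integer $f(b,N,n)$ (as noted right after the definition of $\kappa_{(b,N)}$), the statement is in fact a general fact about the dominance order: if $\kappa = (\kappa_1,\ldots,\kappa_m)$ and $\kappa' = (\kappa'_1,\ldots,\kappa'_m)$ are two weakly decreasing sequences of non-negative integers with the same total sum $S$, and if $\kappa \unlhd \kappa'$, then $\sum_{i=1}^m (i-1)\kappa_i \geq \sum_{i=1}^m (i-1)\kappa'_i$.

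To prove this, I would apply Abel summation. Writing $s_k(\kappa) = \sum_{i=1}^k \kappa_i$, one obtains
\[
\sum_{i=1}^m i\,\kappa_i \;=\; m\,S - \sum_{i=1}^{m-1} s_i(\kappa),
\]
and similarly for $\kappa'$. Subtracting the constant $S = \sum \kappa_i = \sum \kappa'_i$ gives
\[
\sum_{i=1}^m (i-1)\kappa_i - \sum_{i=1}^m (i-1)\kappa'_i \;=\; \sum_{i=1}^{m-1} \bigl(s_i(\kappa') - s_i(\kappa)\bigr).
\]
By the very definition of the dominance order, $\kappa \unlhd \kappa'$ means $s_i(\kappa) \leq s_i(\kappa')$ for all $i$, so each term on the right-hand side is non-negative, and the desired inequality follows.

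There is essentially no obstacle here; the proof is a direct computation once one notices that the number $f(b,N,n)$ of boxes in the partition $\kappa_{(b,N)}(\ulambda)$ depends only on $b$, $N$, and $n$, not on $\ulambda$ itself, so we really are comparing two partitions of the same integer. If anything, the only small point to spell out is this common-sum property, which is a straightforward consequence of the defining formulas
\[
\mathfrak{B}^1_j = \lambda^1_j - j + N + b, \qquad \mathfrak{B}^2_j = \lambda^2_j - j + N.
\]
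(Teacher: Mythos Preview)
Your proof is correct. The paper does not actually supply its own proof of this proposition: it is stated with attribution to \cite{GJ} and used as input. Your argument via Abel summation is the standard elementary proof of the general fact that $n(\lambda)=\sum_i (i-1)\lambda_i$ is order-reversing with respect to dominance on partitions of a fixed integer, and it applies here precisely because, as you note, $\kappa_{(b,N)}(\ulambda)$ and $\kappa_{(b,N)}(\umu)$ are both partitions of $f(b,N,n)$ with the same number $2N+b$ of parts.
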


It is then natural to ask if the pre-order defined  by the dominance order on the  
$\kappa_{(b,N)} (\ulambda)$  coincide with $\preceq_L$.  This is the main result of this paper.

   \begin{Th}\label{main}
 Let $\ulambda=(\lambda^1,\lambda^2)\in \Pi^2_n$ and assume that $N\in \mathbb{N}$ 
 is admissible. Then we have 
 $$\kappa_{(b,N)} (\ulambda)\unlhd\kappa_{(b,N)} (\umu)\iff \ulambda\preceq_L \umu.$$
   \end{Th}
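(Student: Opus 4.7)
The plan is to follow the strategy of the type $A$ case in Example \ref{type-A}, adapting each step to the bipartition/symbol framework of type $B$. Throughout I would restrict attention to parabolic subgroups obtained by removing a single generator $s_i$, which are of the form $W_I \cong W(B_k) \times W(A_{\ell-1})$ with $k + \ell = n$; as in type $A$ these turn out to suffice. The two central tools are Pieri-type rules describing how (truncated) induction from such parabolics acts on the sequences $\kappa_{(b,N)}$, and the characterization of adjacent pairs in the dominance order established in Section 3.

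\textbf{Pieri rules on symbols.} For a bipartition $\umu$ of $k$, I would first check that the constituents of $\Ind_I^S(E^\umu \boxtimes \eps_\ell)$ are exactly the $E^\ula$ whose symbol $\mathfrak{B}_{(b,N)}(\ula)$ is obtained from $\mathfrak{B}_{(b,N)}(\umu)$ by adding $1$ to $\ell$ entries chosen so that the result remains a valid symbol, and that the truncated version $E^\umu \boxtimes \eps_\ell \leadsto E^\ula$ picks out those $\ula$ for which the $1$'s are added to the $\ell$ largest entries of $\mathfrak{B}_{(b,N)}(\umu)$ (with ties resolved consistently). The claim that this is the correct selection follows from Proposition \ref{a-fonc-typeB}: adding to the $\ell$ largest entries is precisely the move that preserves the $\ua$-value.

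\textbf{Direction $\preceq_L \Rightarrow \unlhd$.} Reducing to a single elementary step in the definition of $\preceq_L$, assume $M \preceq_L M'$ inside some parabolic $W_I$, so by induction on $n$ one has $\kappa(M) \unlhd \kappa(M')$. If $E^\ula$ is a constituent of $\Ind_I^S(M)$ and $M' \leadsto E^\umu$, the Pieri rules show that $\kappa_{(b,N)}(\ula)$ arises from $\kappa(M)$ by increasing $\ell$ arbitrary entries while $\kappa_{(b,N)}(\umu)$ arises from $\kappa(M')$ by increasing the $\ell$ largest entries. The desired dominance follows from an elementary combinatorial lemma: if $\alpha \unlhd \beta$ and one adds $1$ to any $\ell$ entries of $\alpha$ and to the $\ell$ largest entries of $\beta$, the resulting partitions still satisfy $\alpha' \unlhd \beta'$. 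The sign-twisted alternative is handled symmetrically, using that tensoring with $\eps$ corresponds to a natural involution on symbols which reverses dominance.

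\textbf{Direction $\unlhd \Rightarrow \preceq_L$ and main obstacle.} For the converse, I would use Section 3 to reduce to the case where $\kappa(\ula) \unlhd \kappa(\umu)$ are adjacent. The goal is then to exhibit a parabolic $W_I \cong W(B_k) \times W(A_{\ell - 1})$ and a bipartition $\unu$ of $k$ for which $E^\ula$ is a constituent of $\Ind_I^S(E^\unu \boxtimes \eps_\ell)$ and simultaneously $E^\unu \boxtimes \eps_\ell \leadsto E^\umu$, giving $E^\ula \preceq_L E^\umu$ in a single step. At the level of symbols this amounts to finding a common ancestor $\mathfrak{B}_{(b,N)}(\unu)$ from which $\mathfrak{B}(\ula)$ and $\mathfrak{B}(\umu)$ are obtained by adding $\ell$ boxes—arbitrarily for $\ula$, and precisely at the $\ell$ largest positions for $\umu$. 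This is where the hard work lies: in the integer case $b$ can be arbitrary, so entries from the two rows of the symbol interleave in a range of patterns. Reading off the correct $\unu$ from the adjacency data of Section 3, and verifying case by case that $\unu$ is genuinely a bipartition and that the identified positions really are the $\ell$ largest of $\mathfrak{B}(\unu)$, will require a delicate combinatorial argument relying crucially on the adjacency characterization.
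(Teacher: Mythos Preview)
Your plan follows the paper's strategy closely: Pieri rules on symbols (this is exactly Lemma~\ref{rec}), the forward direction via the combinatorial lemma on dominance under box-addition (this is \cite[Theorem~7.11]{GL}, which the paper simply cites), and the converse by reducing to adjacent pairs and constructing a common ancestor $\unu$. So the architecture is right.

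There is, however, a real gap in your converse argument. You commit to finding $\unu$ with $E^{\unu}\boxtimes\eps_\ell\uparrow E^{\ula}$ and $E^{\unu}\boxtimes\eps_\ell\leadsto E^{\umu}$, i.e.\ you use only the \emph{first} alternative in the definition of $\preceq_L$. Concretely, at the level of sympartitions this forces $\kappa''$ to be $\kappa'$ with its first $\ell$ entries decreased by $1$, for some $i_1\le\ell<j_1$. The paper shows this works cleanly when $\kappa_{j_1-1}\neq\kappa_{j_1}$ (its Case~1, taking $\ell=j_1-1$). But when $\kappa_{j_1-1}=\kappa_{j_1}$ (Case~2), the natural candidate acquires a new $2$-overlap at the junction $(j_1-1,j_1)$ and can create a $3$-overlap or push $\fO(\kappa'')$ above $N$; no choice of $\ell$ repairs this in general. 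The paper's remedy is to switch to the \emph{second} alternative: pass to the transposed bipartitions $\overline{\ula},\overline{\umu}$, where the offending overlap pattern near $j_1$ becomes a gap of size $\ge 2$ near the corresponding index $i_2$ in $\overline{\kappa}$, and then run the same ancestor construction on that side. You mention the sign-twist only for the forward direction; for the converse it is not optional but essential, and your case analysis will not close without it.
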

   Note that the implication 
    $$\kappa_{(b,N)} (\ulambda)\unlhd\kappa_{(b,N)} (\umu)\Longleftarrow \ulambda\preceq_L \umu$$
has already been proved in \cite[Theorem 7.11]{GL}. So this article is devoted to the proof of the reverse implication. 
 Note that the result has also been established in the case where:
 \begin{itemize}
 \item $b=0$ and $b=1$ by Geck and Iancu \cite[Ex. 8.2]{GL} using results by Spaltenstein \cite{Sp},
 \item $b>n-1$ by Geck and Iancu  \cite[\S 6]{GL}.
 \end{itemize}


\section{On the adjacency of Lusztig symbols with respect to $\tle$}
As we have seen in the previous section, to any bipartition $\ula$ of $n$ we can associate the Lusztig's symbol $\fB_{(b,N)}(\ula)$ and a partition $\ka_{(b,N)}(\ula)$. Therefore, the usual dominance order $\tle$ on partition yields an order on Lusztig's symbols and bipartitions of $n$. We will still denote this order by $\tle$. In this section we study adjacent bipartitions for this order. Let us first clarify what we mean by adjacent: Let $\ula$ and $\umu$ be two bipartitions of $n$ and let $N\in\nN$ be admissible for both $\ula$ and $\umu$. We say that $\ula\lhd\umu$ are adjacent if  $\kappa_{(b,N)} (\ulambda) \lhd \kappa_{(b,N)}  (\umu)$ and there is no bipartition $\unu$ of $n$ such that 
$\kappa_{(b,N)}  (\ulambda) \lhd \kappa_{(b,N)}  (\unu)  \lhd \kappa_{(b,N)}  (\umu)$.  Equivalently, if a bipartition $\unu$ satisfies 
$$\kappa_{(b,N)}  (\ulambda) \tle \kappa_{(b,N)}  (\unu)  \tle \kappa_{(b,N)}  (\umu)$$
then we have either $\unu=\ula$ or $\unu=\umu$. \\

It is a well-known fact, see for example \cite[(1.16)]{Mac-Do}, 
that if two partitions are adjacent for the dominance order then one can be obtain from the other by moving a single box in their Young diagram. 
The aim of this section is to show that a similar result holds for the dominance order on bipartitions and symbols: if $\ula\tle\umu$ are adjacent bipartitions then the partition 
$\ka_{(b,N)} (\ula)$ can be obtained from $\ka_{(b,N)} (\umu)$ moving a single box in their Young diagram.\\


\subsection{Sympartitions and symbols}
 In this section, we study the following problem: given a partition $\la$, under which condition can we 
 find a bipartition $\ula\in \Pi^2_n$ such that $\la=\fB_{(b,N)}(\ula)$ for some $b,N\in\nN$ ? 
First, we need to introduce more definitions. 
  
  \begin{Def}
 A $\ell${\it -overlap} in 
  a partition $\lambda$  is a repetition of exactly $\ell$ elements in $\lambda$.   
  \end{Def}
For example, there is one $2$-overlap in the 
   partition $\lambda=(4,4,2,2,2,1)$ and one $3$-overlap. 

\begin{Def} 
Let $b,N,n\in\nN^\ast$. We say that $\lambda$ is a $(b,N,n)$-sympartition if and only if 
\begin{itemize}
\item $\la$ is a partition of $f(b,N,n)$,
\item there are no $3$-overlap in $\lambda$,
\item the number of $2$-overlaps is at most $N$.  
\end{itemize}
When the triplet $(b,N,n)$ is clear from the context we will write sympartition instead of $(b,N,n)$-sympartition.  
\end{Def}

\begin{Exa}
\label{ex1}
The partition $(7,4,4,3,2,1,1,0)$ is a $(2,3,9)$-sympartition and also a $(4,2,6)$-sympartition. However it can't be a sympartition associate to a triplet of the form $(b,1,n)$ as there are two $2$-overlaps.
\end{Exa}
Recall the definition of admissible in Section \ref{type-B}.
\begin{Prop}\label{symp} Let $\ulambda$ be a bipartition of $n$ and let $N\in\nN$ be admissible for $\ula$. Then the partition $\kappa_{(b,N)} (\ulambda)$ is a $(b,N,n)$-sympartition. 
Conversely, let $\lambda$ be a $(b,N,n)$-sympartition. Then there exists a bipartition $\ulambda$ of $n$ such that 
 $\kappa_{(b,N)} (\ulambda)=\lambda$.
\end{Prop}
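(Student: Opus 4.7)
The proposition splits naturally into a direct implication and its converse; I would treat them in turn, with most of the care going into the converse.

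\textbf{Direct implication.} Fix $\ula=(\la^1,\la^2)$ and an admissible $N$. I would verify the three defining properties of a sympartition directly from the definitions of $\mathfrak{B}^i_j$. The sum identity is the computation
\[
\sum_{j=1}^{N+b}(\la^1_j - j + N+b) + \sum_{j=1}^{N}(\la^2_j - j + N) = |\la^1| + |\la^2| + \binom{N+b}{2} + \binom{N}{2} = f(b,N,n).
\]
The weak decrease $\la^i_j \geq \la^i_{j+1}$ translates to $\mathfrak{B}^i_j - \mathfrak{B}^i_{j+1} = \la^i_j-\la^i_{j+1}+1 \geq 1$, so the entries within each row are pairwise distinct. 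Consequently every value of $\ka_{(b,N)}(\ula)$ appears at most once per row, hence at most twice in total (ruling out 3-overlaps). Finally, a 2-overlap is by definition a value appearing in both rows, so their number is bounded by $|\text{row }2| = N$.

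\textbf{Converse implication.} Given a $(b,N,n)$-sympartition $\la$, regard it as a weakly decreasing sequence $(\la_1,\ldots,\la_{2N+b})$, padding by zeros at the tail if needed. The plan is to partition the multiset of entries into two strictly decreasing blocks, of lengths $N+b$ and $N$ respectively, and then invert the symbol recipe to read off $\la^1$ and $\la^2$. The crucial structural observation is that the absence of 3-overlaps, combined with the strict monotonicity required within each row, \emph{forces} every value appearing twice in $\la$ to place one copy in each row. Writing $k$ for the number of 2-overlaps, the remaining $s := 2N+b-2k$ singleton values must then be distributed into $N+b-k$ free slots in row 1 and $N-k$ free slots in row 2. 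Both slot counts are non-negative precisely because of the sympartition inequality $k \leq N$, and they sum to $s$ as required. Once such a split is fixed, I define $\la^1_j := \mathfrak{B}^1_j - (N+b) + j$ and $\la^2_j := \mathfrak{B}^2_j - N + j$, where $\mathfrak{B}^i$ lists row $i$ in strictly decreasing order. Strict decrease of the rows ensures the $\la^i_j$ are weakly decreasing, and non-negativity of the rows' smallest entry ensures $\la^i_j \geq 0$; the sum identity, run in reverse, gives $|\la^1|+|\la^2|=n$, so $\ula=(\la^1,\la^2)$ is a bipartition of $n$.

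\textbf{Main obstacle.} The genuinely subtle point in the converse is to choose the singleton distribution so that $N$ is in fact admissible for the resulting $\ula$, that is, so that $\la^1$ has at most $N$ nonzero parts (the analogous condition on $\la^2$ is automatic because row 2 already has length $N$). Unpacking the definition, this is equivalent to the requirement that the $b$ smallest entries of row 1 be exactly the values $b-1, b-2, \ldots, 1, 0$. My strategy would be to place the singleton values $0, 1, \ldots, b-1$ (to the extent they appear in $\la$) into row 1 preferentially, then use the counting bounds above to distribute the remaining singletons. The real work in the proof should go into showing that the sympartition constraints (the sum $f(b,N,n)$, no 3-overlaps, at most $N$ 2-overlaps) are restrictive enough to guarantee that this preferential assignment can always be carried out.
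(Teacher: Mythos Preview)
Your direct implication and the skeleton of the converse match the paper's proof exactly: split the entries of $\lambda$ into two strictly decreasing sequences of lengths $N+b$ and $N$ (the sympartition hypotheses guarantee this is possible, just as you argue via the count $s=2N+b-2k$), invert the symbol formulas to recover $\lambda^1,\lambda^2$, and check that the result is a bipartition of $n$. The paper's proof stops at precisely this point.

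Where you diverge is your ``main obstacle'', and here there is a genuine problem. You are right that the paper's definition of $\kappa_{(b,N)}(\ula)$ is, strictly speaking, stated only for admissible $N$, and the paper's proof does not verify admissibility for the constructed $\ula$. But your proposed cure---arranging the split so that the bottom $b$ entries of row~1 are $0,1,\ldots,b-1$---cannot succeed in general. Take $b=2$, $N=1$: the sequence $\lambda=(4,3,2,1)$ is a $(2,1,7)$-sympartition, yet it does not contain $0$, so no split places $0$ into row~1. Indeed every bipartition of $7$ with $N=1$ admissible has $\kappa_{(2,1)}$ equal to the reordering of $\{a+2,c,1,0\}$ for some $a+c=7$, which always contains $0$ and so is never $(4,3,2,1)$; the only preimages (e.g.\ $\ula=((2,2,2),(1))$) are non-admissible. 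Thus the ``real work'' you propose would be an attempt to prove something false. The intended reading of the converse---and the one the paper tacitly uses throughout Section~3---is that the symbol formula is applied formally to any $\ula$ with $\lambda^1$ having at most $N+b$ parts and $\lambda^2$ at most $N$. Under that reading your argument is already complete at the end of your second paragraph, and the third paragraph should be dropped.
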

\begin{proof}
Let $\ulambda$ be a bipartition of $n$. Then by definition of the $(b,N)$-symbol, the sequences $(\mathfrak{B}^{1}_{i})_{i=1,\ldots,N+b}$ and $(\mathfrak{B}^{2}_{i})_{i=1,\ldots,N}$  are strictly increasing. It follows easily that there can't be any $3$-overlaps in $\kappa_{(b,N)} (\ulambda)$ and that the number of $2$-overlaps is less  than or equal to $N$. We have seen in Section \ref{type-B} that $\ka_{(b,N)}(\ula)$ is a partition of $f(b,N,n)$. Hence $\ka_{(b,N)}(\ula)$ is a $(b,N,n)$-sympartition. \\

 Assume now that $\lambda$ is a $(b,N,n)$-sympartition.
 Then there exist  two sequences of strictly increasing integers $\mathfrak{B}^1$
 and $\mathfrak{B}^2$ such that 
  \begin{itemize}
\item there are   $N$ elements in $\mathfrak{B}^1$ and  $N+b$ elements in $\mathfrak{B}^2$,
\item  $\mathfrak{B}^1 \cap \mathfrak{B}^2$ contains all the $2$-overlap of $\lambda$, 
 \item  $\mathfrak{B}^1 \cup \mathfrak{B}^2=\lambda$ (as a multiset). 
\end{itemize}
Now there exist two partitions $\lambda^1$ and $\lambda^2$ such that 
 the increasing  sequences $(\lambda^1_i-i+N+b)_{i=1,\ldots,N+b}$ and 
$(\lambda^2_i-i+N)_{i=1,\ldots,N}$  are respectively equal to $\mathfrak{B}^1$ 
 and $\mathfrak{B}^2$.  If we set $\ula:=(\la^1,\la^2)$ then one can check that $\la$ is a bipartition of $n$ and that  
 $$\mathfrak{B}_{(b,N)} (\ulambda)=\left(\begin{array}
[c]{l}
\mathfrak{B}^{2} \\ 
\mathfrak{B}^{1}     \\  
\end{array}
\right) \quand \kappa_{(b,N)} (\ulambda)=\lambda.$$ 
\end{proof}


\subsection{Raising operators}
Following \cite{Mac-Do}, we introduce the raising operators. To this end, we will work on $M$-uplet of integers instead of partitions or sympartitions. 
\begin{Def}
Let $a=(a_1,\ldots,a_M)\in\nZ^M$.  For $1\leq k_1<k_2\leq M$, we set 
\begin{align*}
\Up_{k_1,k_2}(a)&=(a_1,\ldots,a_{k_1}+1,\ldots,a_{k_2}-1,\ldots,a_M),\\
\Down_{k_1,k_2}(a)&=(a_1,\ldots,a_{k_1}-1,\ldots,a_{k_2}+1,\ldots,a_M).    
\end{align*}
\end{Def}
Let $\la$ be a partition of $n$ and assume that $\Up_{k_1,k_2}(\la)$ is also a partition of $n$. Note that this would be the case whenever $\la_{k_2}>\la_{k_2+1}$ and $\la_{k_1-1}>\la_{k_1}$. Then, looking at the associated Young tableau,  $\Up_{k_1,k_2}(\la)$ is obtained from $\la$ simply by moving the $(k_2,\la_{k_2})$-box to the $k_1\up{th}$-part of $\la$. 

\begin{Lem}
\label{j-sit}
Let $\la\lhd\la'$ be two partitions of $n$ and let 
$$i=\min\{k\in\nN\mid \la_k<\la'_k\}\quand j:=\min\{m\in\nN\mid m>i, \sum_{k=1}^{m}\la_k=\sum_{k=1}^{m}\la'_k\}.$$
We have $\la_j>\la'_j\geq \la'_{j+1}\geq \la_{j+1}$. Further, for all $i\leq k_1< k_2\leq j$ such that $ \Up_{k_1,k_2}(\la)$ is a partition of $n$ we have 
$$\la\lhd \Up_{k_1,k_2}(\la)\unlhd \la'.$$
Similarly, for all $i\leq k_1< k_2\leq j$ such that $ \Down_{k_1,k_2}(\la')$ is a partition of $n$ we have 
$$\la\unlhd \Down_{k_1,k_2}(\la')\lhd \la'.$$

\end{Lem}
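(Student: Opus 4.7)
The plan is to argue entirely in terms of partial sums. Write $\sigma_m(\mu) := \sum_{k=1}^m \mu_k$ for any partition $\mu$ and any $m \in \nN$, so that dominance $\mu \unlhd \mu'$ translates into $\sigma_m(\mu) \leq \sigma_m(\mu')$ for all $m$. I will first establish the inequality claim, then the $\Up$ statement, and finally observe that the $\Down$ statement is entirely symmetric.

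For the inequality claim, minimality of $i$ gives $\sigma_{i-1}(\la) = \sigma_{i-1}(\la')$, and combined with $\la_i < \la'_i$ this yields $\sigma_i(\la) < \sigma_i(\la')$. By minimality of $j$, the partial sums remain distinct throughout the range $i \leq m \leq j-1$, and since $\la \unlhd \la'$ and all quantities are integers this upgrades to $\sigma_m(\la') \geq \sigma_m(\la) + 1$ on that range. At $m=j$ the equality resumes, so $\la_j = \sigma_j(\la) - \sigma_{j-1}(\la)$ must strictly exceed $\la'_j = \sigma_j(\la') - \sigma_{j-1}(\la')$. The bound $\la'_j \geq \la'_{j+1}$ is the partition property of $\la'$, while $\la'_{j+1} \geq \la_{j+1}$ follows by subtracting $\sigma_j(\la) = \sigma_j(\la')$ from $\sigma_{j+1}(\la) \leq \sigma_{j+1}(\la')$.

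For the $\Up$ statement, fix $i \leq k_1 < k_2 \leq j$ with $\nu := \Up_{k_1,k_2}(\la)$ a partition of $n$. The partial sums of $\nu$ agree with those of $\la$ outside the window $k_1 \leq m < k_2$, on which $\sigma_m(\nu) = \sigma_m(\la) + 1$. This immediately gives $\la \lhd \nu$. To compare $\nu$ with $\la'$, indices outside the window inherit the inequality from $\la \unlhd \la'$, while for $m$ inside the window we have $i \leq k_1 \leq m \leq k_2 - 1 \leq j - 1$, so the integrality gap of the previous paragraph yields $\sigma_m(\nu) = \sigma_m(\la) + 1 \leq \sigma_m(\la')$; hence $\nu \unlhd \la'$. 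The $\Down$ claim is entirely symmetric: the partial sums of $\Down_{k_1,k_2}(\la')$ drop by $1$ on $k_1 \leq m < k_2$ and coincide with those of $\la'$ elsewhere, and one verifies in the same manner that they still dominate those of $\la$ while being strictly below those of $\la'$ on the window. I do not anticipate any serious obstacle; the whole argument is a careful bookkeeping across the four regimes cut out by $i$, $k_1$, $k_2$, $j$, with the only non-tautological ingredient being the integrality-upgraded inequality $\sigma_m(\la') \geq \sigma_m(\la) + 1$ valid throughout $i \leq m \leq j - 1$.
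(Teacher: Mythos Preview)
Your proof is correct and follows essentially the same approach as the paper: both argue by comparing partial sums, use the minimality of $j$ together with $\la\unlhd\la'$ to obtain the strict inequality $\sigma_m(\la)<\sigma_m(\la')$ for $i\le m\le j-1$, and then observe that $\Up_{k_1,k_2}$ shifts the partial sums by exactly $1$ on the window $k_1\le m<k_2\subset[i,j-1]$. Your explicit use of the integrality upgrade $\sigma_m(\la')\ge\sigma_m(\la)+1$ and the partial-sum notation $\sigma_m$ make the bookkeeping slightly tidier, but the logic is the same.
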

\begin{proof}
By minimality of $j$ we know that
$$\sum_{k=1}^{j-1}\la_k<\sum_{k=1}^{j-1}\la'_k$$
therefore to have $\sum_{k=1}^{j}\la_k=\sum_{k=1}^{j}\la'_k$ we must have $\la_j>\la'_j$. The fact that $\la'_j\geq \la'_{j+1}$ is clear since $\la'$ is a partition. Finally we need to have $\la_{j+1}\leq \la'_{j+1}$ and this follows from the fact that  $\la\lhd \la'$.\\

Let $k_1,k_2\in\nN$ be such that $i\leq k_1\leq k_2\leq j$ and assume that $\Up_{k_1,k_2}(\la)$ is a partition of $n$. 
It is clear that $\la\lhd \Up_{k_1,k_2}(\la)$ since we moved a box in the upward direction. Let us show that  $\la'':=\Up_{k_1,k_2}(\la)\unlhd \la'$. Let $i\leq m\leq j$. Note that $\la''_m=\la_m$ for all $m<k_1$ thus
$$\sum_{k=1}^{m}\la_k=\sum_{k=1}^{m}\la''_k\quad\text{for all $m<k_1$}.$$
Further, for all $m\geq k_2$ we have
$$\sum_{k=1}^{m}\la_k=\sum_{k=1}^{m}\la''_k.$$
From those two inequalities and since $\la\unlhd \la'$, we get 
$$\sum_{k=1}^{m}\la''_k\leq \sum_{k=1}^{m}\la'_k\quad\text{for all $m<k_1$ and all $m\geq k_2$}.$$
Let $k_1\leq m< k_2$. By minimality of $j$ and since $m<j$, we have
$$\sum_{k=1}^{m}\la_k<\sum_{k=1}^{m}\la'_k.$$
By definition of $\la''$, we know that
$$\sum_{k=1}^{m}\la''_k=\sum_{k=1}^{m}\la_k+1$$
and therefore we get
$$\sum_{k=1}^{m}\la''_k\leq\sum_{k=1}^{m}\la'_k.$$
The result follows. The proof of the second part of the lemma is similar. 
\end{proof}


\subsection{A first simplification} 
Let $\la$ be a partition of length $r$. By convention, if $i>r$ we set $\la_i=0$ and if $i\leq 0$ we set $\la_i=+\infty$. For $i<j\in\nN$ we set 
$$\la_{[i,j]}=(\la_i,\ldots,\la_j), \la_{\leq i}=(\la_1,\ldots,\la_i)\quand \la_{\geq i}=(\la_i,\ldots,\la_r).$$
We define $\la_{<i},\la_{>i}$ in a similar and obvious fashion. 
\begin{Prop}  
\label{first-simple}
Let $\ula$ and $\umu$ be two bipartitions of $n$ and let $N\in\nN$ be admissible for $\ula$ and $\umu$. Assume that  $\ula\lhd\umu$ are adjacent. 
Let $\kappa:=\kappa_{(b,N)}  (\ulambda)=(\kappa_1,\ldots,\kappa_r)$ and  $\kappa':=\kappa_{(b,N)}  (\umu)=(\kappa_1 ',\ldots,\kappa_r')$ and set 
$$i=\min\{k\in\nN\mid \ka_k<\ka'_k\}\quand j:=\min\{m\in\nN\mid m>i, \sum_{k=1}^{m}\ka_k=\sum_{k=1}^{m}\ka'_k\}.$$
Then we have $\kappa_{<i} = {\kappa'}_{< i} $
   and $\kappa_{> j} = {\kappa'}_{> j}$.
\end{Prop}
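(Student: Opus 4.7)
My plan is to treat the two equalities separately, since only the second one really uses the adjacency hypothesis.

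For $\kappa_{<i}=\kappa'_{<i}$, no adjacency is needed. By the very definition of $i$, we have $\kappa_k\ge\kappa'_k$ for all $k<i$, so the partial sums satisfy $\sum_{k=1}^{m}\kappa_k\ge\sum_{k=1}^{m}\kappa'_k$ for $m<i$; on the other hand $\ka\unlhd\ka'$ gives the opposite inequality. Hence the partial sums coincide for every $m<i$, which forces $\kappa_k=\kappa'_k$ coordinate-wise on $\{1,\dots,i-1\}$. This step I would just record as a preliminary observation.

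For $\kappa_{>j}=\kappa'_{>j}$, I would argue by contradiction. Assume $\kappa_{>j}\neq\kappa'_{>j}$ and set
\[
i'=\min\{k>j\mid \kappa_k\neq\kappa'_k\},\qquad j'=\min\{m>i'\mid \textstyle\sum_{k=1}^m\kappa_k=\sum_{k=1}^m\kappa'_k\}.
\]
Since $\sum_{k=1}^j\kappa_k=\sum_{k=1}^j\kappa'_k$ and $\kappa_k=\kappa'_k$ for $j<k<i'$, the partial sums coincide at $i'-1$, and the dominance $\kappa\unlhd\kappa'$ forces $\kappa_{i'}<\kappa'_{i'}$. The proof of Lemma \ref{j-sit} uses only the equality of partial sums just before $i$ and the minimality of $j$; it therefore applies verbatim to the pair $(i',j')$, yielding $\kappa_{j'}>\kappa'_{j'}\geq\kappa_{j'+1}$ and, for every $i'\le k_1<k_2\le j'$ such that $\Up_{k_1,k_2}(\kappa)$ is a partition of $f(b,N,n)$, the strict chain $\kappa\lhd\Up_{k_1,k_2}(\kappa)\unlhd\kappa'$; symmetrically, $\kappa\unlhd\Down_{k_1,k_2}(\kappa')\lhd\kappa'$ whenever $\Down_{k_1,k_2}(\kappa')$ is a partition.

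The goal is then to exhibit some such $(k_1,k_2)$ for which the modified partition is a $(b,N,n)$-sympartition: Proposition \ref{symp} will convert it into a bipartition $\unu$ satisfying $\kappa_{(b,N)}(\ula)\lhd\kappa_{(b,N)}(\unu)\lhd\kappa_{(b,N)}(\umu)$, contradicting adjacency. This is the main obstacle: a single raising move can create a forbidden $3$-overlap at rows $i'-1,i',i'+1$ (if $\kappa_{i'-1}=\kappa_{i'-2}=\kappa_{i'}+1$) or at rows $j'-1,j',j'+1$, and it can also push the number of $2$-overlaps above $N$. I would attack this by a short case analysis: first attempt the natural move $\Up_{i',j'}(\kappa)$ and inspect local overlaps at rows $i'$ and $j'$, using the constraints $\kappa_{i'}<\kappa'_{i'}$ and $\kappa_{j'}>\kappa'_{j'}$ to restrict the possible configurations of equal entries in $\kappa$ and $\kappa'$ near these rows; if an obstruction appears at $i'$, shift $k_1$ up by one (possible because $\kappa$ itself has no $3$-overlap), and symmetrically shift $k_2$ at $j'$; if $\kappa$ is simultaneously too rigid at both ends, dualize and apply the corresponding $\Down_{k_1,k_2}$ on $\kappa'$, which is legal by the dual statement of Lemma \ref{j-sit} and which uses the fact that $\kappa'$ also has no $3$-overlap. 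In every case one of the four moves will produce a sympartition strictly between $\kappa$ and $\kappa'$, contradicting adjacency and completing the proof.
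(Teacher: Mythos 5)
Your first equality ($\kappa_{<i}=\kappa'_{<i}$) is fine, and in fact slightly more careful than the paper's one-line ``clear from the definition of $i$'': you correctly note that the definition of $i$ alone only gives $\kappa_k\ge\kappa'_k$ for $k<i$, and that dominance is needed to pin down equality of partial sums.

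The second equality is where your proof has a genuine gap, and it also diverges sharply from what the paper does. You reduce everything to producing a single-box move $\Up_{k_1,k_2}(\kappa)$ or $\Down_{k_1,k_2}(\kappa')$ with $i'\le k_1<k_2\le j'$ whose result is a sympartition, and you then hand-wave a ``short case analysis'' (try $\Up_{i',j'}$; if blocked shift $k_1$ or $k_2$; if both ends rigid, dualize). But that claim -- that between any two strictly comparable, nonidentical sympartitions one can always find a sympartition obtained by a single box move -- is essentially the \emph{content of the adjacency Theorem in Section 3.5}, which the paper proves only \emph{after} and \emph{using} Proposition~\ref{first-simple}. That proof is explicitly described as ``rather long and tedious,'' runs through six main cases plus subcases, and requires the separate ``double break Lemma'' to handle the staircase configurations where every single-box move between $i'$ and $j'$ either creates a $3$-overlap or overshoots the $2$-overlap bound $N$. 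Your four sketched moves do not cover these configurations: in particular, when $\kappa'$ restricted to $[i',j']$ has no jump $\ge 2$, the correct move is a $\Down$ by \emph{two} positions between the two ``break points'' guaranteed by that lemma, not any of the local shifts you list, and your ``dualize'' alternative is not explained (the transpose reindexes the whole symbol, so it is not obvious which local move it corresponds to, nor that it lands strictly between $\kappa$ and $\kappa'$). So as written the argument is a sketch of a much harder theorem, not a proof of this proposition.

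The paper's actual idea here is quite different and much cheaper: form the two \emph{glued} partitions $\alpha=(\kappa_{\le j},\kappa'_{\ge j+1})$ and $\beta=(\kappa'_{\le j},\kappa_{\ge j+1})$, check that both are partitions of $f(b,N,n)$ with no $3$-overlap, and then use the inequality $\fO(\alpha)+\fO(\beta)\le\fO(\kappa)+\fO(\kappa')\le 2N$ to conclude (by pigeonhole) that at least one of $\alpha,\beta$ has at most $N$ $2$-overlaps, hence is a sympartition. Since $\kappa\unlhd\alpha\unlhd\kappa'$ and $\kappa\unlhd\beta\unlhd\kappa'$, adjacency forces that sympartition to equal $\kappa$ or $\kappa'$, and because $\kappa_{\le j}\ne\kappa'_{\le j}$ this can only mean $\kappa_{>j}=\kappa'_{>j}$. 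No box-by-box case analysis is needed at this stage. If you want to salvage your route, you would have to actually carry out (or cite) the full adjacency analysis on the interval $[i',j']$; it is much shorter to use the gluing-plus-overlap-count argument.
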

\begin{proof}
It is clear that $\kappa_{< i} = {\kappa'}_{<i} $ by definition of $i$. Let us show that  
$\kappa_{> j} = {\kappa'}_{> j} $. 
 First, by minimality of $j$ we have $\kappa_{j}>\kappa_{j}'$ and since $\ka\lhd \ka'$ we also have $ \kappa_{j+1} ' \geq \kappa_{j+1}$. (In particular, we have $\ka_j>\ka_{j+1}$).
This implies that both  $\alpha:=(\kappa_{\leq j},\kappa_{\geq j+1} ')$ and  $\beta:=(\kappa_{\leq{j}} ',\kappa_{\geq j+1} )$ are partitions. In addition 
 we have 
 $$\begin{array}{rcl}
 |\alpha| & = & \sum_{1\leq k \leq j} \kappa_{k} +  \sum_{j+1\leq k \leq r} \kappa_{k} ' \\
 &=&  \sum_{1\leq k \leq j} \kappa_{k}'  +  \sum_{j+1\leq k \leq r} \kappa_{k} ' \\
&=& |\kappa'|
\end{array}$$
and 
 $$\begin{array}{rcl}
 |\beta| & = & \sum_{1\leq k \leq j} \kappa'_{k} +  \sum_{j+1\leq k \leq r} \kappa_{k}  \\
 &=&  \sum_{1\leq k \leq j} \kappa_{k} +  \sum_{j+1\leq k \leq r} \kappa_{k}  \\
&=& |\kappa|
\end{array}$$
So $\alpha$ and $\beta$ are both partitions of $f(b,N,n)$.
   We claim that either $\alpha$ or $\beta$ is a  $(b,N,n)$-sympartition. First let us show that 
 there are no $3$-overlaps in these two partitions: 
\begin{itemize}
\item Since $\kappa_{j}> \kappa_{j}'\geq \ka'_{j+1}$, it is easy to see that $\al$ cannot contain any $3$-overlap as $\ka$ and $\ka'$ doesn't by Proposition \ref{symp}.
\item Assume that there is a $3$-overlap in $\beta$. Then we must have $\kappa'_{j}=\kappa_{j+1}$ and either $\ka'_{j-1}=\ka'_j$ or $\ka_{j+2}=\ka_{j+1}$. First 
the fact that $\kappa'_{j}=\kappa_{j+1}$ implies that $\kappa'_{j}=\kappa_{j+1}=\ka'_{j+1}$ since $\ka'_j\geq \kappa_{j+1}'\geq \ka_{j+1}$. Second, since  $\kappa'_{j+1}=\kappa_{j+1}$ we must have $\ka'_{j+2}\geq \ka_{j+2}$ as  $\kappa \lhd \kappa'$. Finally, we get $\ka_{j+1}=\ka'_{j+1}\geq \kappa_{j+2} ' \geq \kappa_{j+2}$.

 \begin{itemize}
 \item Assume that $\kappa_{j-1} '=\kappa_{j}'$. Then 
 $\kappa_{j-1} '=\kappa_{j} '=\kappa_{j+1}'$
   and we have a $3$-overlap in $\kappa'$ which is a contradiction.
 \item Assume that  $\kappa_{j+1} =\kappa_{j+2}$.  Then the inequality $\ka_{j+1}=\ka'_{j+1}\geq \kappa_{j+2} ' \geq \kappa_{j+2}$
implies that $\ka'_{j+1}=\ka'_{j+2}$ and since we have seen that $\ka'_{j}=\ka'_{j+1}$ we have a $3$-overlap in $\kappa'$ which is a contradiction.
 \end{itemize}
\end{itemize}
Thus, as claimed, we conclude that we there is no $3$-overlap in both $\alpha$ and 
$\beta$. 

For $\unu$ a partition, we denote by $\fO(\unu)$ the number of $2$-overlaps in $\unu$. Then we have 
$$\begin{array}{rcl}
\fO(\kappa)&=&\fO (\kappa_{<j+1})+\fO( \kappa_{>j})\quad (\text{ we have seen that $\ka_j>\ka_{j+1}$})\\
\fO(\kappa')&=&\fO (\kappa_{<j+1} ')+\fO( \kappa_{>j}') +\delta_{\kappa_{j} ', \kappa_{j+1} ' }\\
\end{array}$$
where $\delta$ stands for  Kronecker delta. We also have 
$$\begin{array}{rcl}
\fO(\alpha)&=&\fO(\kappa_{<j+1})+\fO( \kappa_{>j} ')\\
\fO(\beta)&=&\fO(\kappa_{<j+1} ')+\fO( \kappa_{>j}) +\delta_{\kappa_{j} ', \kappa_{j+1}  }.\\
\end{array}$$
Note that if $\delta_{\kappa_{j} ', \kappa_{j+1} }=1$ then we also have 
$\delta_{\kappa'_{j} , \kappa'_{j+1}}=1$ thus in any case 
$\delta_{\kappa'_{j}, \kappa_{j+1} }\leq \delta_{\kappa'_{j}, \kappa'_{j+1}  }$. We want to show that 
 either $\fO(\alpha)\leq N$ or  $\fO(\beta) \leq N$.  We argue by contradiction assuming that 
 $\fO(\alpha)>N$ and $\fO(\beta) >N$.  Then 
$$\fO(\kappa)+\fO(\kappa')\geq \fO(\alpha)+\fO(\beta)\geq N+2$$
so 
$$\fO(\kappa)+\fO(\kappa')\geq 2N+2$$
 which is a contradiction because both $\kappa$ and $\kappa'$ are $(b,N,n)$-sympartitions and so we have 
 $\fO(\kappa)\leq N$ and  $\fO(\kappa ') \leq N$. Since neither $\al$ nor $\beta$ contain a $3$-overlap, we see that one of these partitions is a $(b,N,n)$-sympartition. \\

Finally, the hypotheses imply that $\kappa_{\leq j} \lhd {\kappa'}_{\leq j}$ (they are both partitions of the same rank) 
 and $\kappa_{> j} \unlhd {\kappa'}_{> j}$ (they are both partitions of the same rank).  
Thus we are in one of the following configuration:
\begin{itemize}
\item either $\alpha$ is a $(b,N,n)$-sympartition and we have 
 $$\kappa  \unlhd \alpha  \unlhd \kappa'  . $$
Then either $\kappa = \alpha$ or $\kappa'=\al$ and thus $\kappa_{>j}=\kappa_{>j} '$ since we have  $\kappa_{\leq j}\neq \kappa_{\leq j} '$.
\item  or $\beta$ is a $(b,N)$-sympartition and we have 
 $$\kappa  \unlhd \beta  \unlhd \kappa' .$$
Then either  $\kappa = \beta$ or $\kappa'=\beta$ and thus $\kappa_{>j}=\kappa_{>j} '$ since we have    $\kappa_{\leq j}\neq \kappa_{\leq j} '$.
 
\end{itemize}
This concludes the proof.

\end{proof}


\subsection{The double break Lemma }
From now on and until the end of this section, we will assume that  $\ula\lhd\umu$ are two adjacent bipartitions of $n$ and  that $N\in\nN$ is admissible for $\ula$ and $\umu$. We set  
$$\kappa:=\kappa_{(b,N)}  (\ulambda)=(\kappa_1,\ldots,\kappa_r), \kappa':=\kappa_{(b,N)}  (\umu)=(\kappa_1 ',\ldots,\kappa_r')$$
$$i=\min\{k\in\nN\mid \ka_k<\ka'_k\}\quand j:=\min\{m\in\nN\mid m>i, \sum_{k=1}^{m}\ka_k=\sum_{k=1}^{m}\ka'_k\}.$$
For a partition $\nu$ we set $\fJ^{\nu}_k:=\nu_k-\nu_{k+1}\geq 0$. We then say that $k$ is a break point of $\nu$ if and only if we have 
$\fJ^{\nu}_{k-1}\geq 1$ and $\fJ^{\nu}_k\geq 1$.

\Yboxdim10pt

\begin{Lem} Assume that for all $i\leq m\leq j-1$ we have $\fJ^{\ka'}_m=0$ or $1$. Then $\ka'$ has at least two break points $k_1$ and $k_2$ in between $i$ and $j-1$.
\end{Lem}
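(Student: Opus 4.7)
The proof begins by establishing the key inequality $\ka'_i - \ka'_j \geq 2$. By minimality of $i$ we have $\ka_i \leq \ka'_i - 1$, by the same argument used in the proof of Lemma~\ref{j-sit} we have $\ka_j \geq \ka'_j + 1$, and since $\ka$ is a partition, $\ka_i \geq \ka_j$; combining yields $\ka'_i - \ka'_j \geq 2$. Under the hypothesis $\fJ^{\ka'}_m \in \{0,1\}$ on $[i,j-1]$, the telescoping identity $\ka'_i - \ka'_j = \sum_{m=i}^{j-1} \fJ^{\ka'}_m$ then forces at least two of these differences to equal $1$.

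Next I would argue by contradiction, supposing that $\ka'$ has at most one break point in $[i,j-1]$. Decompose $\ka'$ on $[i,j]$ into its maximal plateaus of constant value, of sizes $s_1,\ldots,s_u$. The previous step gives $u \geq 3$, and under the hypothesis a position $k \in (i,j-1]$ is a break point of $\ka'$ if and only if the plateau containing $k$ has size $1$; the position $k=i$ is a break point if additionally $\ka'_{i-1} > \ka'_i$. The "at most one break point" assumption therefore forces all but at most one of the plateaus indexed by $\{1,\ldots,u-1\}$ to have size $\geq 2$, so $\ka'$ exhibits a rigid step-like structure on $[i,j]$ with many $2$-overlaps.

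To contradict adjacency, I would then construct a sympartition $\nu$ strictly between $\ka$ and $\ka'$ for the dominance order. By Proposition~\ref{symp}, such a $\nu$ corresponds to a bipartition $\unu$ with $\ula \lhd \unu \lhd \umu$, contradicting adjacency. Concretely, take $\nu = \Down_{k_1,k_2}(\ka')$ for carefully chosen transition positions $k_1 < k_2$ in $[i,j]$ (so that $\fJ^{\ka'}_{k_1} \geq 1$ and $\fJ^{\ka'}_{k_2-1} \geq 1$ ensure $\nu$ is a partition of $f(b,N,n)$, and $k_2 > k_1 + 1$ since $\fJ^{\ka'} \leq 1$ in the relevant range); Lemma~\ref{j-sit} then yields $\ka \unlhd \nu \lhd \ka'$, and inspection of the values of $\nu$ at $k_1$ or $k_2$ gives $\nu \neq \ka$. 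The main difficulty is ensuring that $\nu$ contains no $3$-overlap, since the rigid plateau structure of $\ka'$ tends to create one: a $3$-overlap appears at $k_1$ precisely when the plateau immediately after $k_1$ in $\ka'$ has size $\geq 2$, and symmetrically at $k_2$. In cases where every direct $\Down$-move on $\ka'$ is blocked by such a $3$-overlap, I would instead apply the dual operator $\Up_{k_1,k_2}$ to $\ka$, using the equalities $\ka_{<i} = \ka'_{<i}$ and $\ka_{>j} = \ka'_{>j}$ together with the partial sum constraints coming from the definitions of $i$ and $j$ to locate suitable transition points in $\ka$. The crux of the argument is then a case analysis on the position of the (at most one) break point and on the boundary values $\ka'_{i-1}$ and $\ka'_{j+1}$ (which are not controlled by the hypothesis), showing that in every configuration at least one of the two constructions produces a genuine sympartition strictly between $\ka$ and $\ka'$.
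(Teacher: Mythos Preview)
Your approach is genuinely different from the paper's, and the opening observation that $\ka'_i-\ka'_j\geq 2$ (hence at least two jumps of size~1 in $\ka'$ on $[i,j-1]$) is correct and pleasant. However, the heart of your argument is an unperformed case analysis, and there is a real obstruction to completing it as outlined.

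The difficulty is this: under the assumption of at most one break point, the interior plateaus of $\ka'$ on $[i,j]$ almost all have size $\geq 2$, and this is precisely the configuration in which every candidate $\Down_{k_1,k_2}(\ka')$ creates a $3$-overlap (or fails to be a partition when $k_2=k_1+1$, since $\fJ^{\ka'}\leq 1$). For instance, if $\ka'_{[i,j]}=(c,c,c-1,c-1,c-2,c-2)$ then no $\Down$ move produces a sympartition. Your fallback is to apply $\Up$ to $\ka$ instead, but you have essentially no control over $\ka$ on $[i,j]$: you only know $\ka_{<i}=\ka'_{<i}$, $\ka_{>j}=\ka'_{>j}$, $\ka_i<\ka'_i$, $\ka_j>\ka'_j$ and the equality of partial sums. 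Nothing in your argument locates transition points in $\ka$ or rules out $3$-overlaps for an $\Up$ move. So as written, neither branch of your dichotomy is justified.

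The paper's proof avoids this entirely by a direct counting argument that does not use adjacency at all. It computes, for each of the four boundary patterns (according to whether $\fJ^{\ka'}_i$ and $\fJ^{\ka'}_{j-1}$ are $0$ or $1$), the minimum possible value of $\sum_{k=i}^j\ka'_k$ assuming zero or one break point, and compares it with the maximum possible value of $\sum_{k=i}^j\ka_k$ over all sympartitions with $\ka_i\leq\ka'_i-1$. In each case the latter is strictly smaller, contradicting $\sum_{k=i}^j\ka_k=\sum_{k=i}^j\ka'_k$. This proves the stronger statement that no sympartition $\ka$ with the given $i,j$ exists below such a $\ka'$, and in particular yields the lemma without constructing any intermediate object.
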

\begin{proof}
Assume that $\ka'$ has no break point. There are 4 cases to consider, whether $\fJ^{\ka'}_i$ and $\fJ^{\ka'}_{j-1}$ are equal to 0 or 1. Those four cases correspond to the following shapes for $\ka'$
$$(1)\ \yng(4,4,3,3,2,2),\quad (2)\ \yng(4,4,3,3,2),\quad (3)\ \yng(5,4,4,3,3,2,2)\quor (4)\ \yng(5,4,4,3,3,2).$$
Let $\ell\in\nN$ be such that $i-j+1=2\ell+\eps$ where $\eps=0$ or $1$. 
A straightforward computation in each case yields 
\begin{align*}
(1)\quad& \sum_{k=i}^{j}\ka'_k=\sum^{\ell}_{k=1} 2(\ka'_i-k+1)=2\ell\ka_i'-\ell(\ell-1),\\
(2)\quad& \sum_{k=i}^{j}\ka'_k=\sum^{\ell}_{k=1} 2(\ka'_i-k+1)+(\ka'_i-\ell)=(2\ell+1)\ka_i'-\ell^2,\\
(3)\quad& \sum_{k=i}^{j}\ka'_k=\ka'_i+\sum^{\ell}_{k=1} 2(\ka'_i-k)=(2\ell+1)\ka_i'-\ell(\ell+1),\\
(4)\quad& \sum_{k=i}^{j}\ka'_k=\ka'_i+\sum^{\ell-1}_{k=1} 2(\ka'_i-k)+(\ka'_i-\ell)=2\ell\ka_i'-\ell^2.\\
\end{align*}
Form there, we see that the smallest value of $\sum_{k=i}^{j}\ka'_i$ that can be achieved when there are no break-point is  
$$\begin{array}{ccccccc}
2\ell\ka_i'-\ell^2& \text{ if $i-j+1$ is even,}\\
&\\
(2\ell+1)\ka_i'-\ell(\ell+1)& \text{ if $i-j+1$ is odd.}
\end{array}$$
Now the largest sympartition that one can construct starting with a $i^{\text{th}}$-part equal to $\ka_i<\ka'_i$ will satisfy
\begin{align*}
\sum_{k=i}^{j}\ka_k=
\begin{cases}
2\ell\ka_i-\ell(\ell-1)&\mbox{ if $i-j+1$ is even,}\\
(2\ell+1)\ka_i-\ell^2&\mbox{ if $i-j+1$ is odd.}
\end{cases}
\end{align*}
If $i-j+1$ is even then we would have
\begin{align*}
\sum_{k=i}^{j}\ka_k&\leq 2\ell\ka_i-\ell(\ell-1)\\
&\leq 2\ell(\ka'_i-1)-\ell(\ell-1)\\
&=2\ell\ka'_i-\ell(\ell+1)\\
&<\sum_{k=i}^{j}\ka'_i
\end{align*}
contradicting the fact that $\sum_{k=i}^{j}\ka'_i=\sum_{k=i}^{j}\ka_i$. \\
If $i-j+1$ is odd then we would have
\begin{align*}
\sum_{k=i}^{j}\ka_i&\leq(2\ell+1)\ka_i-\ell^2\\
&\leq (2\ell+1)(\ka'_i-1)-\ell^2\\
&= (2\ell+1)\ka'_i-(\ell+1)^2\\
&<\sum_{k=i}^{j}\ka'_i
\end{align*}
once again contradicting the fact that $\sum_{k=i}^{j}\ka'_i=\sum_{k=i}^{j}\ka_i$. So we see that $\kappa'$ has at least one break point. \\

Assume that $\ka'$ has a unique break point. Let $N$ be the length of the part corresponding to the breakpoint. Note that we have $\ka'_j<N<\ka'_i$. There are 4 cases to consider, whether $\fJ^{\ka'}_i$ and $\fJ^{\ka'}_{j-1}$ are equal to 0 or 1. Those four cases correspond to the following shape for $\ka'$
$$(1)\ \yng(5,5,4,3,3,2,2),\quad(2)\  \yng(5,5,4,3,3,2),\quad(3)\  \yng(6,5,5,4,3,3,2,2)\quor(4)\  \yng(6,5,5,4,3,3,2).$$
To use the computation done before, we will set $\ell$ to be such that $i-j+2=2\ell+\eps$ where $\eps=0$ or $1$. A straightforward computation in each case yields 
\begin{align*}
(1)\quad& \sum_{k=i}^{j}\ka'_k=2\ell\ka_i'-\ell(\ell-1)-N,\\
(2)\quad& \sum_{k=i}^{j}\ka'_k=(2\ell+1)\ka_i'-\ell^2-N,\\
(3)\quad& \sum_{k=i}^{j}\ka'_k=(2\ell+1)\ka_i'-\ell(\ell+1)-N,\\
(4)\quad& \sum_{k=i}^{j}\ka'_k=2\ell\ka_i'-\ell^2-N.\\
\end{align*}
From there, using the fact that $\ka'_j<N<\ka'_i$ we see that the smallest value of $\sum_{k=i}^{j}\ka'_k$ that can be achieved when there is exactly one break-point is  striclty less than
$$\begin{array}{ccccccc}
2\ell\ka_i'-\ell(\ell+1)& \text{ if $i-j+1$ is even,}\\
&\\
(2\ell-1)\ka_i'-\ell^2& \text{ if $i-j+1$ is odd.}
\end{array}$$
Now the largest sympartition that we can construct starting with a $i^{\text{th}}$-part equal to $\ka_i<\ka'_i$ satisfies
\begin{align*}
\sum_{k=i}^{j}\ka_k\leq
\begin{cases}
2\ell\ka_i-\ell(\ell-1)&\mbox{ if $i-j+1$ is even,}\\
(2\ell-1)\ka_i-(\ell-1)^2&\mbox{ if $i-j+1$ is odd.}
\end{cases}
\end{align*}
If  $i-j+1$ is even then we would have
\begin{align*}
\sum_{k=i}^{j}\ka_k&\leq 2\ell\ka_i-\ell(\ell-1)\\
&\leq 2\ell(\ka'_i-1)-\ell(\ell-1)\\
&=2\ell\ka'_i-\ell(\ell-1)-2\ell\\
&=2\ell\ka'_i-\ell(\ell+1)\\
&<\sum_{k=i}^{j}\ka'_k
\end{align*}
contradicting the fact that $\sum_{k=i}^{j}\ka'_k=\sum_{k=i}^{j}\ka_k$. \\
If $i-j+1$ is odd then we would have
\begin{align*}
\sum_{k=i}^{j}\ka_k&\leq(2\ell-1)\ka_i-(\ell-1)^2\\
&\leq (2\ell-1)(\ka'_i-1)-(\ell-1)^2\\
&= (2\ell-1)\ka'_i-(\ell-1)^2-(2\ell-1)\\
&=(2\ell-1)\ka'_i-\ell^2\\
&<\sum_{k=i}^{j}\ka'_k
\end{align*}
again contradicting the fact that $\sum_{k=i}^{j}\ka'_k=\sum_{k=i}^{j}\ka_k$. This conclude the proof. 
\end{proof}


\subsection{Adjacency of two sympartitions}

\begin{Th}
Let $b,N,n\in\nN$ and let $\ka,\ka'$ be two adjacent $(b,N,n)$-sympartitions with respect to $\lhd$. Then there exists $k_1,k_2\in\nN$ such that $\ka'=\Up_{k_1,k_2}(\ka)$. 
\end{Th}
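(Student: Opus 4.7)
The strategy is to produce, from the adjacency data, a pair of indices $k_1 < k_2$ with $i \leq k_1 < k_2 \leq j$ such that $\Down_{k_1,k_2}(\ka')$ is still a $(b,N,n)$-sympartition. Once this is found, Lemma \ref{j-sit} gives $\ka \unlhd \Down_{k_1,k_2}(\ka') \lhd \ka'$, and the adjacency hypothesis forces $\Down_{k_1,k_2}(\ka') = \ka$; this is exactly the desired equality $\ka' = \Up_{k_1,k_2}(\ka)$. The argument splits according to whether $\ka'$ admits a ``large'' descent inside $[i,j-1]$.

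In the first case, some $m \in [i,j-1]$ satisfies $\fJ^{\ka'}_m \geq 2$, and I would set $(k_1,k_2) = (m,m+1)$. The slack $\ka'_m \geq \ka'_{m+1}+2$ makes $\Down_{m,m+1}(\ka')$ a partition. Combining the inequalities $\ka'_{m-1} \geq \ka'_m > \ka'_m-1$ and $\ka'_{m+2} \leq \ka'_{m+1} < \ka'_{m+1}+1$ with the absence of 3-overlaps in $\ka'$, one rules out the creation of any new 3-overlap at the triples $(m-1,m,m+1)$, $(m,m+1,m+2)$ or $(m+1,m+2,m+3)$. At most one new 2-overlap appears, at $(m,m+1)$, and only in the borderline subcase $\fJ^{\ka'}_m=2$; moreover when $\fJ^{\ka'}_{m+1}=0$ the pre-existing 2-overlap at $(m+1,m+2)$ is simultaneously destroyed, so the net count is unchanged. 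The remaining subcase is handled by combining Proposition \ref{first-simple} with the sympartition bounds on both $\ka$ and $\ka'$, possibly after refining the choice of $m$ (for instance by taking $\fJ^{\ka'}_m$ maximal).

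In the second case, $\fJ^{\ka'}_m \in \{0,1\}$ throughout $[i,j-1]$, and the double break lemma supplies two break points $k_1 < k_2$ of $\ka'$ in this range. The strict descents at each break point make $\Down_{k_1,k_2}(\ka')$ a partition. New 2-overlaps arise at $(k_1,k_1+1)$ and $(k_2-1,k_2)$, and each becomes a forbidden 3-overlap exactly when $\fJ^{\ka'}_{k_1+1}=0$ or $\fJ^{\ka'}_{k_2-2}=0$, respectively. I would choose the extremal break points provided by the lemma (the smallest and the largest) and invoke the shape analysis in its proof to show that such extremal break points cannot be immediately adjacent to a flat region of $\ka'$. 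The two new 2-overlaps replace strict descents of $\ka'$; a careful local exchange argument, together with the sympartition bounds on $\ka$ and $\ka'$, keeps the total count at most $N$. The main obstacle is precisely this last bookkeeping: if the naive count threatens to exceed $N$, the argument must be run by contradiction, using the excess to construct an intermediate sympartition strictly between $\ka$ and $\ka'$ and so contradicting adjacency.
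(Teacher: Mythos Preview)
Your overall plan---find indices so that $\Down_{k_1,k_2}(\ka')$ is again a sympartition and invoke Lemma~\ref{j-sit} plus adjacency---is the right framework, and your Case~1 opening (taking $m$ with $\fJ^{\ka'}_m\geq 2$ and trying $\Down_{m,m+1}$) is close to one of the paper's moves. But the execution has two genuine gaps.

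\textbf{Case 1, the ``remaining subcase''.} When $\fJ^{\ka'}_m=2$, $\fJ^{\ka'}_{m-1}\geq 1$ and $\fJ^{\ka'}_{m+1}\geq 1$, the move $\Down_{m,m+1}$ creates one new $2$-overlap and destroys none; if $\fO(\ka')=N$ you are stuck. Taking $\fJ^{\ka'}_m$ maximal does not help in general, and ``combining Proposition~\ref{first-simple} with the sympartition bounds'' is not an argument. The paper handles this by choosing non-adjacent indices: it locates the nearest overlap \emph{above} $m$ (at some position $k$ with $\fJ^{\ka'}_k=0$, guaranteed to exist inside $[i-1,m)$ because $\ka'_{i-1}=\ka'_i$) and uses $\Down_{k+1,m+1}$, which destroys the overlap at $(k,k+1)$ while creating at most one new one. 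Crucially, this relies on knowing there \emph{is} an overlap near the top, which the paper extracts from its case split on $\fJ^\ka_{i-1}$; your purely-$\ka'$ case split does not give you this for free. In several of the paper's cases (e.g.\ $\fJ^\ka_{i-1}\geq 2$) the argument applies $\Up$ to $\ka$ rather than $\Down$ to $\ka'$; restricting yourself to $\Down$ on $\ka'$ only is not obviously sufficient.

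\textbf{Case 2 is wrong as stated.} Your claim that ``extremal break points cannot be immediately adjacent to a flat region of $\ka'$'' is the opposite of the truth. If $k_1$ is the \emph{smallest} break point in $[i,j-1]$ and all jumps are $0$ or $1$, then $k_1-1$ is not a break point, and since $\fJ^{\ka'}_{k_1-1}=1$ this forces $\fJ^{\ka'}_{k_1-2}=0$: the smallest break point is necessarily \emph{preceded} by an overlap (and dually the largest is followed by one). Consequently your move $\Down_{k_1,k_2}(\ka')$ typically creates a $3$-overlap (since $\ka'_{k_1}-1=\ka'_{k_1+1}$ and often $\ka'_{k_1+1}=\ka'_{k_1+2}$) or two uncompensated $2$-overlaps. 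The correct move, which the paper uses, is $\Down_{k_1-1,\,k_2+1}(\ka')$: subtracting at $k_1-1$ destroys the existing overlap at $(k_1-2,k_1-1)$ and creates one at $(k_1-1,k_1)$, net zero; symmetrically at the other end. With this choice $\fO(\ka'')=\fO(\ka')$ and no $3$-overlap appears.
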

\begin{proof}
The proof of this theorem is rather long and tedious and require a case by case analysis. First we set 
$$i:=\min\{k\in\nN\mid \ka_k<\ka'_k\}\quand j:=\min\{m\in\nN\mid m>i, \sum_{k=1}^{m}\ka_k=\sum_{k=1}^{m}\ka'_k\}.$$
Then by Proposition \ref{first-simple} we know that $\ka_{<i}=\ka_{<i}$ and $\ka_{>j}=\ka'_{>j}$. 
The idea of the proof is to construct a sympartition $\ka''$ either of the form $\Up_{k_1,k_2}(\ka)$ or $\Down_{k_1,k_2}(\ka')$ for some $i\leq k_1\leq k_2\leq j$. Assume that we have constructed such a sympartition $\ka''$. Then by Lemma \ref{j-sit}, we know that 
$$\ka\lhd \ka''\unlhd \ka' \quor \ka\unlhd \ka''\lhd \ka'.$$
Since $\ka,\ka'$ are adjacent this implies that $\ka''=\ka'$ or $\ka''=\ka$ as required. \\

{\bf Case 1.}  $\fJ^\ka_{i-1}\geq 2$ and $\fJ^{\ka}_j\geq 2$. In that case, we do not create any overlap by removing the $(\ka_j,j)$-box in $\ka$ nor by adding a box to the $i\up{th}$ part of $\ka$. Then if we set $\ka''= \Up_{i,j}(\ka)$ we have $\fO(\ka'')\leq \fO(\ka)$ and the result follows.  (Recall that $\fO(\nu)$ denotes the number of 2-overlaps in $\nu$.)\\

{\bf Case 2.} Assume that  $\fJ^\ka_{i-1}\geq 2$ and $\fJ^{\ka}_j=1$.  We have $\ka_j=\ka_{j+1}+1$ and by Lemma \ref{j-sit} we get  $\ka'_j=\ka'_{j+1}= \ka_{j+1}$. Then the shapes of $\ka$ and $\ka'$ are described in Figure \ref{ka1}.  
\begin{figure}[h]
        \centering
        \begin{subfigure}[b]{0.3\textwidth}
                \centering
$\yng(9,7,0,0,0,4,3)$
                \caption{Sympartition $\ka[i-1,j+1]$}
        \end{subfigure}
        \qquad\qquad
                        \begin{subfigure}[b]{0.3\textwidth}
                \centering
              $\yng(9,0,0,0,0,3,3)$                
                \caption{Sympartition $\ka'[i-1,j+1]$}
        \end{subfigure}
              \caption{Shape of $\ka$ and $\ka'$}\label{A2}
                \label{ka1}
\end{figure}

Of course we may have $\ka_{i-1}-\ka_i>2$.  Since $\ka'$ is a sympartition, we have $\ka_{j+1}=\ka'_{j+1}>\ka'_{j+2}=\ka_{j+2}$.\\

{\it Subcase 1: There exists an overlap in $\ka[i,j]$.} We set
$$k:=\max\{m\in\nN\mid i\leq  m< j, \fJ^{\ka}_{m}=0\}.$$ 
Let us show that $\ka_{k+2}>\ka_{k+3}$. Since $\ka_{j+1}>\ka_{j+2}$ and  $\ka_{j}>\ka_{j+1}$, this is true if $k=j-1$ or $k=j-2$. 
If $k<j-2$, we cannot have $\ka_{k+2}=\ka_{k+3}$ since $k+2<j$ and this would contradict the maximality of $k$.

Consider the partition $\ka''= \Up_{k+1,i}(\ka)$. We claim that $\ka''$ is a sympartition. To see this, note that since $\ka_{k+2}>\ka_{k+3}$ and $\ka_{i-1}-\ka_i\geq 2$ there is no $3$-overlap in $\ka''$. Then by removing the $(k+1,\ka_{k+1})$-box, we eliminate the $2$-overlap at place $(k,k+1)$ while possibly creating one at  place $(k+1,k+2)$. Also, by adding a box at the $i\up{th}$-part of $\ka$, we cannot have created a $2$-overlap since $\ka_{i-1}-2\geq \ka_{i}$. So the number of $2$-overlaps in $\ka''$ is less than or equal to the number of $2$-overlaps in $\ka$. This shows that $\ka''$ is a sympartition and we get the result in this case. \\

{\it Subcase 2: There is no overlap between $i$ and $j$.} 
 Set $\ka''= \Up_{i,j}(\ka)$. 
First,  since $\ka_{j+1}>\ka_{j+2}$, we do not create a $3$-overlap removing the $(j,\ka_j)$-box in $\ka$.
Then we have  
\begin{align*}
\fO(\ka'')&=\fO(\ka[1,i-1])+1+\fO(\ka[j+2,\ell(\ka)])\\
&=\fO(\ka'[1,i-1])+1+\fO(\ka'[j+2,\ell(\ka')])\\
&\leq \fO(\ka')
\end{align*}
where $\ell(\ka)$ denotes the length of $\ka$. The last inequality holds because $\ka'$ has a $2$-overlap at place $(j,j+1)$ and may have more between $i$ and $j$. Since $\ka'$ is a sympartition, so is $\ka''$. \\

{\it From now on and until the end of the proof we will assume that $\fJ^\ka_{i-1}=1$. 
 In that case, since $\ka'_{i-1}=\ka_{i-1}$ and  $\ka'_{i}>\ka_{i}$, we have $\ka'_i=\ka'_{i-1}=\ka_i+1$. That is, the sympartitions $\ka$ and $\ka'$ have shapes as shown in Figure \ref{ka2}.
\begin{figure}[h]
        \centering
        \begin{subfigure}[b]{0.3\textwidth}
                \centering
$\yng(9,8)$
                \caption{Sympartition $\ka[i-1,i]$}
        \end{subfigure}
        \qquad\qquad
                        \begin{subfigure}[b]{0.3\textwidth}
                \centering
                $\yng(9,9)$                
                \caption{Sympartition $\ka'[i-1,i]$}
        \end{subfigure}
              \caption{Shape of $\ka$ and $\ka'$}
              \label{ka2}
\end{figure}}

{\bf Case 3.} $\fJ^\ka_{i}=0$ and $\fJ^\ka_{j}=1$. Note that by Lemma \ref{j-sit}, we have $\ka'_j=\ka'_{j+1}=\ka_{j+1}$. The shape of $\ka$ and $\ka'$ are described in Figure \ref{ka3}. 
\begin{figure}[h]
\centering
\begin{subfigure}[b]{0.3\textwidth}
\centering
\yng(9,8,8,0,0,0,5,4)
\caption{Sympartition $\ka[i-1,j+1]$}
\end{subfigure}
\qquad\qquad
\begin{subfigure}[b]{0.3\textwidth}
\centering
\yng(9,9,0,0,0,0,4,4)
\caption{Sympartition $\ka'[i-1,j+1]$}
\end{subfigure}
\caption{Shape of $\ka$ and $\ka'$}
\label{ka3}
\end{figure}

Let  $k:=\max\{m\in\nN\mid i\leq  m< j, \fJ^{\ka}_{m}=0\}$. Note that $k$ is well defined since there is a $2$-overlap at place $(i,i+1)$. Set $\ka''=\Up_{i,k+1}(\ka)$. 
Arguing as in Case~2.1, using the maximality of $k$,  
one can show that we do not create a $3$-overlap by removing the $(k+1,\ka_{k+1})$-box and that the number of $2$-overlaps remains constant.
Also, we have $\ka_{i-2}=\ka'_{i-2}>\ka'_{i-1}=\ka_{i-1}$, so that we can add a box at the $i\up{th}$-part of $\ka$ without creating a $3$-overlap while keeping the number of $2$-overlaps constant.  It follows that $\ka''$ is a sympartition as required. \\

{\bf Case 4: $\fJ^\ka_{i}=0$  and $\fJ^\ka_{j}>1$}. The shape of $\ka$ and $\ka'$ are described in Figure \ref{ka4}.
Of course, we may have $\ka_j-\ka_{j-1}> 2$. As in the previous case we have  $\ka_{i-2}>\ka_{i-1}$, so that we can add a box at the $i\up{th}$-part of $\ka$ without creating a $3$-overlap while keeping the number of $2$-overlaps constant.  Then setting $\ka''=\Up_{i,j}(\ka)$ easily yields the result. 
\begin{figure}[h]
\centering
\begin{subfigure}[b]{0.3\textwidth}
\centering
\yng(9,8,8,0,0,0,5,3)
\caption{Sympartition $\ka[i-1,j+1]$}
\end{subfigure}
\qquad\qquad
\begin{subfigure}[b]{0.3\textwidth}
\centering
\yng(9,9,0,0,0,0,0,0)\caption{Sympartition $\ka'[i-1,i]$}
\end{subfigure}
\caption{Shape of $\ka$ and $\ka'$}
\label{ka4}
\end{figure}

{\bf Case 5: $\fJ^\ka_{i}\geq 1$ and $\fJ^\ka_{j}=1$}. The shape of $\ka$ and $\ka'$ are described in Figure \ref{ka5}.
\begin{figure}[h]
\centering
\begin{subfigure}[b]{0.3\textwidth}
\centering
\yng(9,8,7,0,0,0,5,4)
\caption{Sympartition $\ka[i-1,j+1]$}
\end{subfigure}
\qquad\qquad
\begin{subfigure}[b]{0.3\textwidth}
\centering
\yng(9,9,0,0,0,0,4,4)
\caption{Sympartition $\ka'[i-1,i]$}
\end{subfigure}
\caption{Shape of $\ka$ and $\ka'$}
\label{ka5}
\end{figure}
We may have $\ka_i-\ka_{i-1}\geq 1$. Arguing as in Case 3, we can show that $\ka_{j+1}>\ka_{j+2}$.\\
First assume that there exists $i\leq m< j$ such that $\ka'_{m}-\ka'_{m+1}\geq 2$. Then set  
$$k:=\max\{\ell\in\nN\mid   \ell< m, \fJ^{\ka'}_{\ell}=0\}.$$ 
Note that $k$ is well-defined as there is an overlap in $\ka'$ at places $(i-1,i)$ and that we have $k\geq i-1$. Consider $\ka''=\Down_{k+1,m+1}(\ka'')$. We show that there is no $3$-overlap in $\ka''$. If $k<m-1$ then we cannot have $\ka'_{k+1}=\ka'_{k+2}$ because this would contradicts the maximality of $k$.  If $k=m-1$ then we do not create an $3$-overlap since $\ka'_{m}-\ka'_{m+1}\geq 2$.  We show that $\fO(\ka'')\leq \fO(\ka')$. If $k<m-1$, then by removing the $(k+1,\ka'_{k+1})$-box from $\ka'$, we remove an overlap at place $(k,k+1)$ while possibly creating one at place $(k+1,k+2)$. By adding a box at the $m+1\up{th}$-part of $\ka''$ we do not create a $2$-overlap at place $(m,m+1)$ since $\ka'_{m}-\ka'_{m+1}\geq 2$, hence the result in this case.  If $k=m-1$ then we are in the situation described in Figure \ref{km} and the result follows easily.
\begin{figure}[h]
\centering
$\yng(6,6,4)\quad\leadsto\quad\yng(6,5,5)$
\caption{From $\ka'[k,k+2]$ to $\ka''[k,k+2]$}
\label{km}
\end{figure}

Second assume that for all $i\leq m<  j$ we have $\fJ_{m}^{\ka'}=0$ or $1$. 
Then by the double break Lemma, there exist at least two break points in $\ka'$. Let $k_1$ (respectively $k_2$) be the highmost (respectively lowest) one. Note that it is necessarily preceeded (respectively followed) by an overlap. 
Then set $\ka''=\Down_{k_1-1,k_2+1}(\ka')$. It can be seen in Figure \ref{ka8} that $\fO(\ka'')=\fO(\ka')$, so that $\ka''$ is a sympartition and the result follows. \\

{\bf Case 6: $\fJ^\ka_{i}\geq 1$ and $\fJ^\ka_{j}> 1$}. In that case, we can remove the $(j,\ka_j)$-box of $\ka$ without creating a $2$-overlap nor a $3$-overlap. If there exists $i\leq k<j-1$ such that $\fJ^\ka_{k-1}\geq 2$ then we set  $\ka''=\Up_{k,j}(\ka)$ and $\ka''$ is a sympartition. Otherwise, there are no such $k$ and $\fJ^\ka_{k-1}=0$ or $1$ for all $i\leq k<j-1$. If between $i$ and $j$ there is a sequence of shape 
$$(\ast)\quad \yng(8,7,6,6)$$
then we can put the $(j,\ka_j)$-box at the $3\up{rd}$-line starting from the top to obtain the desired sympartition $\ka''$. Since, the top of $\ka$ has the following shape 
$$\yng(9,8,7)$$
we see that either $\ka[i,j-1]$ is a "staircase" or one can find a shape as in $(\ast)$. If it is a staircase then we set $\ka''=\Up_{i,j}(\ka)$. This situation is described in Figure \ref{ka9} in the case where $\fJ^\ka_{j-1}=1$ though it is possible to have $\fJ^\ka_{j-1}\geq 1$. The number of overlap in $\ka''$ is then
\begin{align*}
\fO(\ka'')&=\fO(\ka[1,i-1])+1+\fO(\ka[j+1,\ell(\ka)])\\
&= \fO(\ka'[1,i-1])+1+\fO(\ka'[j+1,\ell(\ka')])\\
&\leq \fO(\ka')
\end{align*}
and thus $\ka''$ is a sympartition. This concludes the proof of the theorem. \\

\begin{figure}[h]
\centering
\begin{subfigure}[b]{1\textwidth}
\centering
$\yng(12,12,11,11,10,9,0,0,7,6,5,5,4,4)\leadsto\quad \yng(12,12,11,10,10,9,0,0,7,6,6,5,5,4)$
\caption{Case $k_1<k_2-1$}
\end{subfigure}
$\ $\\

\begin{subfigure}[b]{.9\textwidth}
\centering
$\yng(10,9,9,8,7,6,6,0,0)\leadsto\quad\yng(10,9,8,8,7,7,6,0,0)$
\caption{Case $k_1=k_2-1$}
\end{subfigure}
\caption{From $\ka'$ to $\ka''=\Down_{k_1-1,k_2+1}(\ka')$}
\label{ka8}
\end{figure}

\begin{figure}[h]
\centering
\centering
 $\yng(9,8,7,6,5,4,2)\leadsto \quad\yng(9,9,7,6,5,3,2)$
\caption{From $\ka$ to $\ka''$}
\label{ka9}
\end{figure}

\end{proof}


\section{Proof of the main result}
We start  by giving an explicit characterisation of the preorder $\preceq_L$ as in \cite{GL}, which is in spirit very similar to the one given in Example \ref{type-A} in type A. Then, using our characterisation of the adjacency of bipartitions we will prove the main result of this paper, that is, the order $\preceq_L$ on bipartition is the same as the order $\unlhd$ in the integer case.  

\subsection{On the pre-order $\preceq_L$ in type $B_n$}

We refer to  \cite[Lemma 7.5]{GL} for details in this section.  
Let $\ulambda$ and $\umu$ be two bipartitions of $n$. We have 
 $\ulambda \preceq_L \umu$ if and only if there exists a sequence 
 $$\ulambda:=\ulambda_0,\ulambda_1,\ldots, \ulambda_m:=\umu$$
 such that for each $i\in \{1,\ldots,m\}$, the following condition is satisfied:
 
 There exist a decomposition $n=k_i+l_i$ and  bipartitions $\unu_i$, $\unu_i '$ of $k_i$ such that  $E^{\unu_i} \preceq_L E^{\unu_i '}$   and
 \begin{itemize}
 \item either  $E^{\unu_i} \boxtimes \varepsilon_{l_i}\uparrow E^{ \ulambda_{i-1}}$ 
  and  $E^{\unu_i '} \boxtimes \varepsilon_{l_i} \rightsquigarrow_L E^{\ulambda_i}$ 
  \item or $E^{\unu_i} \boxtimes \varepsilon_{l_i}\uparrow E^{ \overline{\ulambda_{i}}}$ and $E^{\unu_i '} \boxtimes \varepsilon_{l_i}\rightsquigarrow E^{  \overline{\ulambda_{i-1}}}$ 
(where $\overline{\umu}$ means the transpose of $\umu$). 
\end{itemize}
\begin{Lem}\label{rec}
Let $n=k+l$ where $k\geq 0$ and $l\geq 1$. Let $\ulambda$ be a bipartition of $k$ and let $\umu$ be a bipartition of $n$. Then 
$E^{ \umu}$ is a constituent of  $\Ind(E^\ulambda \boxtimes \varepsilon_{l})$ 
 if and only if $\kappa_{(b,N)}  (\umu)$ can be obtained from $\kappa_{(b,N)}  (\ulambda)$ by increasing $l$ parts by $1$. In addition, we have 
 $\ba(E^\ulambda \boxtimes \varepsilon_{l})=\ba(E^\umu)$ if and only if these parts are the $l$ largest entries of  $\kappa_{(b,N)} (\ulambda)$.
\end{Lem}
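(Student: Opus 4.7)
The plan is to combine the Pieri-type rule for induction in type $B$ with the explicit description of the $\ba$-function in terms of symbols. First, I would invoke the standard Pieri rule (see for instance Geck--Pfeiffer): the representation $\Ind(E^\ula\boxtimes\eps_l)$ of the parabolic $W_{B_k}\times W_{A_{l-1}}$ inside $W_{B_n}$ decomposes as
$$\Ind(E^\ula\boxtimes\eps_l)=\bigoplus_{\umu}E^\umu,$$
where the direct sum runs over bipartitions $\umu=(\mu^1,\mu^2)$ of $n$ such that $\mu^1/\la^1$ and $\mu^2/\la^2$ are both vertical strips with $|\mu^1/\la^1|+|\mu^2/\la^2|=l$.

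Next, I would translate this rule to the level of $\beta$-numbers. Adding a single box to row $i$ of $\la^j$ replaces $\fB^j_i$ by $\fB^j_i+1$, and the vertical-strip condition is exactly the requirement that the resulting sequences $\fB^1$ and $\fB^2$ remain strictly decreasing of the same lengths. Consequently, $E^\umu$ is a constituent of $\Ind(E^\ula\boxtimes\eps_l)$ if and only if the symbol $\fB_{(b,N)}(\umu)$ is obtained from $\fB_{(b,N)}(\ula)$ by raising $l$ entries by one. Merging $\fB^1$ and $\fB^2$ into a single sorted sequence $\ka_{(b,N)}$ then gives the stated characterization comparing $\ka_{(b,N)}(\umu)$ and $\ka_{(b,N)}(\ula)$.

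For the statement on $\ba$-values, I would use the additivity of the $\ba$-function on external tensor products, together with the value $\ba(\eps_l)=\binom{l}{2}$ obtained from Example \ref{type-A} applied to the partition $(1^l)$. Combining these with Proposition \ref{a-fonc-typeB}, the equality $\ba(E^\ula\boxtimes\eps_l)=\ba(E^\umu)$ reduces to
$$n_{(b,N)}(\umu)-n_{(b,N)}(\ula)=\binom{l}{2}.$$
Because raising the entry at position $i$ of a sorted decreasing sequence contributes $i-1$ to the change in $n_{(b,N)}=\sum(i-1)\ka_i$, a sum of such contributions over $l$ distinct positions $i_1<\cdots<i_l$ is always at least $0+1+\cdots+(l-1)=\binom{l}{2}$, with equality precisely when $\{i_1,\ldots,i_l\}=\{1,\ldots,l\}$, that is, when the increments are applied to the $l$ largest entries of $\ka_{(b,N)}(\ula)$.

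The main technical obstacle is the careful dictionary between vertical-strip additions on bipartitions and symbol-level $+1$ increments, in particular when ties occur in the multiset $\ka_{(b,N)}(\ula)$. One must verify that every valid multiset increment lifts to a compatible pair of vertical strips (for the reverse direction of the first part) and that the minimality argument for the second part is unambiguous when several choices of positions produce the same resulting multiset.
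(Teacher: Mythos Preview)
Your proposal is correct and follows the same approach as the paper: the first assertion via the Pieri rule (which the paper simply cites as \cite[Lemma 7.6]{GL}), and the second via additivity of the $\ba$-function together with $\ba(\eps_l)=\binom{l}{2}$ and the formula $n_{(b,N)}(\kappa)=\sum_i (i-1)\kappa_i$. The paper only writes out the ``if'' direction of the $\ba$-value statement and attributes the rest to Lusztig, whereas you supply the minimality argument for both directions; your worry about ties is harmless, since whenever $\kappa'$ is obtained from $\kappa$ by increasing $l$ entries by $1$ the sorted sequences satisfy $\kappa'_i-\kappa_i\in\{0,1\}$, so the set of positions $S=\{i:\kappa'_i=\kappa_i+1\}$ is well-defined of size $l$ and $n_{(b,N)}(\kappa')-n_{(b,N)}(\kappa)=\sum_{i\in S}(i-1)$ as you use.
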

\begin{proof}

The first assertion is proved in \cite[Lemma 7.6]{GL}. The ``if'' part of the second assertion is a result of Lusztig \cite[Lemma 7.10]{GL}.
 Let us show that if we increase the $l$ largest entries of  $\kappa_{(b,N)} (\ulambda)$ by 1 then  $\ba(E^\ulambda \boxtimes \varepsilon_{l})=\ba(E^\umu)$. 
  First, by \cite[Remark 2.8]{GL}, we have that  
  $$\ba(E^\ulambda \boxtimes \varepsilon_{l} )= \ba(E^\ulambda)+ \ba(\varepsilon_{l}).$$
  Next  by \cite[Ex. 1.3.8]{GJ}, we have that 
   $$ \ba_{\varepsilon_{l} } = \sum_{1\leq i\leq l} (i-1).$$
If $\kappa_{(b,N)}  (\umu)$ is  obtained from $\kappa_{(b,N)}  (\ulambda)$ by increasing the $l$ largest entries of  $\kappa_{(b,N)} (\ulambda)$ then, by Proposition \ref{formula}, we obtain
 $$\ba_{E^\umu}=\ba_{E^\ulambda}+  \sum_{1\leq i\leq l} (i-1)$$
as required. \end{proof}


\subsection{Proof of the main result}
We are now in position to prove Theorem \ref{main}.  
Assume that $\ulambda$ and $\umu$ are two bipartitions such that  $\kappa:=\kappa (\ulambda) \unlhd \kappa':=\kappa (\umu)$. 
Assume that $\kappa=\kappa'$ then we already know that $\ulambda$ and $\umu$ are in the same family 
 and thus that $\ulambda\preceq_L \umu$. So let us assume that $\kappa \lhd \kappa'$.

 We want to show that $\ulambda\preceq_L \umu$. For this, it is enough to consider the case where $\ka$ and $\ka'$ are adjacent.  In the previous section, we have seen that there exist $j_1$ and $i_1$ such that $\ka'=\Up_{i_1,j_1}(\ka)$. In other words
 $$\begin{array}{rcl}
 \kappa&=&(\ldots, \kappa_{i_1-1},\kappa_{i_1},\kappa_{i_1+1},\ldots,\kappa_{j_1-1},\kappa_{j_1},\kappa_{j_1+1},\ldots)\\
 \kappa'&=&(\ldots, \kappa_{i_1-1},\kappa_{i_1}+1,\kappa_{i_1+1},\ldots,\kappa_{j_1-1},\kappa_{j_1}-1,\kappa_{j_1+1},\ldots).
 \end{array}$$ 
Note that this implies that there is at most one element in $\kappa$ which is equal to $\kappa_{j_1}'=\kappa_{j_1}-1$. Indeed, since the $(j_1,\ka_{j_1})$-box can be remove in $\ka$, it implies that $\ka_{j_1+1}\leq \ka_{j_1}-1$. If the inequality is strict, then the result is obvious since no element of $\ka$ is equal to $\kappa_{j_1}-1$. Next if $\ka_{j_1+1}=\ka_{j_1}-1$ then we see that we must have $\ka_{j_1+2}<\ka_{j_1+1}$, otherwise we would create a $3$-overlap by removing the $(j_1,\ka_{j_1})$-box. \\


{\bf Case 1: $\ka_{j_1-1}\neq \ka_{j_1}$.}\\

We will construct a $(N,b,n-\ell)$-sympartition $\kappa''$ for some $\ell\in \mathbb{N}$  satisfying the two following properties:
 \begin{itemize}
 \item $\kappa$ is obtained from $\kappa''$ by increasing $l$ entries, 
  \item $\kappa'$ is obtained from $\kappa''$ by increasing the $l$ largest entries of $\kappa''$. 
 \end{itemize} 
 Consider the partition $\kappa''$ such that 
 $$\kappa_j ''=\left\{
 \begin{array}{ll}
 \kappa_j' -1  & \text{if $j< j_1$},\\
 \kappa_j' & \text{ if $j\geq j_1$}. \\
 \end{array}\right.$$
Note that this is indeed a partition since we have seen that there is at most one element in $\kappa$ which is equal to $\kappa_{j_1}'=\kappa_{j_1}-1$.
 In other words, we have
 $$\kappa''=(\ldots, \kappa_{i_1-1}'-1,\kappa_{i_1}'-1,\kappa_{i_1+1}'-1,\ldots,\kappa_{j_1-1}'-1,\kappa_{j_1} ',\kappa_{j_1+1}',\ldots)$$
and also
$$\kappa''=(\ldots, \kappa_{i_1-1}-1,\kappa_{i_1},\kappa_{i_1+1}-1,\ldots,\kappa_{j_1-1}-1,\kappa_{j_1}-1,\kappa_{j_1+1},\ldots).$$
We show that $\kappa''$ is a $(N,b,n-\ell)$-sympartition. It is clear that $\ka''$ is a partition of $f(N,b,n-\ell)$. So we only need to check that (1) there is no $3$-overlap and (2) the number of $2$-overlaps is less than or equal to $N$. 
 \begin{enumerate}
\item[(1)] There is no $3$-overlap in $\ka'$, thus the only possiblity to have created one in $\ka''$ is to have 
$\kappa_{j_1-1}'-1=\kappa_{j_1} '=\kappa_{j_1+1}'$ but this would imply that $\ka_{j_1-1}=\ka_{j_1}$, contradicting the hypothesis.

 \item[(2)]  The number of  $2$-overlaps in $\kappa''$ satisfies
  $$\begin{array}{rcl}
  \fO (\kappa'')&=&\fO({\kappa''}_{<j_1})+\delta_{{\kappa''}_{j_1-1},{\kappa''}_{j_1}}+\fO({\kappa''}_{\geq j_1})\\
  &=& \fO({\kappa'}_{<j_1})+\delta_{{\kappa'}_{j_1-1}-1,{\kappa'}_{j_1}}+\fO({\kappa'}_{>j_1}).
  \end{array}
  $$
  As we have  $\kappa_{j_1-1} '-1=\ka_{j_1-1}-1\neq \kappa_{j_1}-1=\kappa_{j_1} '$ by hypothesis we conclude that 
 $$\fO (\kappa'')\leq \fO (\kappa')\leq N.$$
 \end{enumerate}

 Now, the $j_1-1$ largest elements in $\kappa''$
  are the elements  $\kappa_i-1$  with  $i< j_1$. Thus $\kappa'$ can be obtained from  $\kappa''$ by adding $1$ to the $j_1-1$ largest entries
   where as $\kappa$ can be obtained from  $\kappa''$ by adding $1$ to $j_1-1$ elements. This implies that $\ulambda\preceq_L \umu$ as desired by Lemma \ref{rec}.\\

{\bf Case 2: $\ka_{j_1-1}= \ka_{j_1}$.}\\
 The partition $\ka$, $\ka'$ have the following shape around $j_1$.
 \begin{figure}[h]
\centering
\begin{subfigure}[b]{0.4\textwidth}
\centering
\yng(0,6,5,5,4,0)
\caption{Sympartition $\ka[j_1-2,j_1+1]$}
\end{subfigure}
\qquad\qquad
\begin{subfigure}[b]{0.4\textwidth}
\centering
\yng(0,6,5,4,4,0)
\caption{Sympartition $\ka'[j_1-2,j_1+1]$}
\end{subfigure}
\caption{Shape of $\ka$ and $\ka'$}
\label{ka5p}
\end{figure}

We consider the transposed bipartitions $\overline{\ulambda}$ and $\overline{\umu}$. 
We choose $t$ so that we have the following inclusion of multisets
$$(t-\kappa_1,\ldots, t-\kappa_r)\subseteq (0,1,\ldots,t,0,1,\ldots, t)$$
and
$$(t-\kappa'_1,\ldots, t-\kappa'_r)\subseteq (0,1,\ldots,t,0,1,\ldots, t).$$
Then the 
sympartitions $\ov{\ka}$ and $\ov{\ka}'$ associated respectively to $\ov\ula$ and $\ov\umu$ are just the partition obtained by reordering the complement of the first multisets into the seconds in the above inclusions; see \cite[\S 22]{Luequal}. There exist $i_2<j_2$ such that
$$\begin{array}{rcl}
   \overline{\kappa '}&=&(\overline{\kappa '}_1,\ldots,\overline{\kappa'}_{i_2-1},\overline{\kappa'}_{i_2},
   \overline{\kappa'}_{i_2+1},\ldots,\overline{\kappa'}_{j_2-1},\overline{\kappa'}_{j_2},
   \overline{\kappa'}_{j_2+1},\ldots)\\
   \overline{\kappa}&=&(\overline{\kappa'}_1,\ldots,\overline{\kappa'}_{i_2-1},\overline{\kappa'}_{i_2}+1,
   \overline{\kappa'}_{i_2+1},\ldots,\overline{\kappa'}_{j_2-1},\overline{\kappa'}_{j_2}-1,
   \overline{\kappa'}_{j_2+1},\ldots)\end{array}.$$
 Looking at the shape of $\ka$, $\ka'$ around $j_1$, we see that the shape of $\ov\ka$ and $\ov\ka'$ around $i_2$ is the following
  \begin{figure}[h]
\centering
\begin{subfigure}[b]{0.33\textwidth}
\centering
\yng(0,7,6,4,0)
\caption{Sympartition $\ka[i_2-1,i_2+1]$}
\end{subfigure}
\qquad\qquad
\begin{subfigure}[b]{0.33\textwidth}
\centering
\yng(0,7,5,4,0)
\caption{Sympartition $\ka'[i_2-1,i_2+1]$}
\end{subfigure}
\caption{Shape of $\ka$ and $\ka'$}
\label{ka5pp}
\end{figure}

Consider the partition $\ka''$ defined by
 $$\kappa_j ''=\left\{
 \begin{array}{ll}
 \kappa_j -1  & \text{if $j\leq i_2$},\\
 \kappa_j & \text{ if $j> i_2$.} \\
 \end{array}\right.$$
Since $\ka_{i_2}- \ka_{i_2+1}\geq 2$, we see that $\ka''$ is a sympartition. Then $\ka$ is obtained from $\ka''$ by adding one box to the $i_2$ largest parts and $\ka'$ is obtained by adding $i_2$ boxes. This concludes the proof of the Theorem.

\begin{Rem}
In the non-integer case, the definition of symbol is different (see \cite[Definition 3.1]{GL}) and it is not clear, at least to us, how to characterise adjacency of symbols in this case. 
\end{Rem} 


\noindent
{\small \textsc{ Jeremie Guilhot: Laboratoire de Math\'ematique et de Physique Th\'eorique, UMR 7350, 37000 Tours}\\
{\it Email address} \url{jeremie.guilhot@lmpt.univ-tours.fr}}

\medskip

\noindent
{\small \textsc{Nicolas Jacon : Laboratoire de Math\'ematique de Reims, 51687 REIMS cedex 2 }\\
{\it Email address} \url{nicolas.jacon@univ-fcomte.fr}}

\end{document}